\documentclass[10pt]{amsart}

\usepackage{amssymb,MnSymbol}
\usepackage{amsthm,amsmath,enumerate,color}
\usepackage{cite}
\usepackage[utf8]{inputenc}
\usepackage{graphicx}

\theoremstyle{plain}
\newtheorem{theorem}{Theorem}[section]
\newtheorem{lemma}[theorem]{Lemma}
\newtheorem{proposition}[theorem]{Proposition}
\newtheorem{corollary}[theorem]{Corollary}

\newtheorem{observation}[theorem]{Observation}

\theoremstyle{definition}
\newtheorem{de}[theorem]{Definition}
\newtheorem{example}[theorem]{Example}
\newtheorem{conj}[theorem]{Conjecture}

\theoremstyle{remark}
\newtheorem{remark}{Remark}

\def\stb{,\ldots ,}

\begin{document}
\title{Associative idempotent nondecreasing functions are reducible}
\author{Gergely Kiss and G\'abor Somlai }

\keywords{$n$-ary semigroup, associativity, reducible, extremal,
quasitrivial, idempotent, neutral element}

\subjclass[2010]{Primary 20N15, 39B72; Secondary 20M14, 20M30,
06F05}
\thanks{The research was supported by  the internal research project R-AGR-0500 of the University of Luxembourg. The first author was partially supported by the Hungarian Scientific Research Fund (OTKA) K104178. The second author was partially supported by the Hungarian Scientific Research Fund (OTKA) K115799 }



%

\date{\today}


\maketitle

\begin{abstract}
An $n$-ary associative function is called reducible if it can be
written as a composition of a binary associative function. We
summarize known results when the function is defined on a chain and
is nondecreasing. Our main result shows that associative idempotent
and nondecreasing functions are uniquely reducible.
\end{abstract}

\section{Introduction}\label{s1}
In this paper we investigate the class of functions $F:X^n\to X$
($n\ge 2$) defined on a chain (i.e., totally ordered set) $X$ that
are nondecreasing, idempotent and associative. For arbitrary set $X$
the study of associativity stemmed back to the pioneering work of
D\"ornte \cite{Dor28} and Post \cite{Pos40}. Dudek and Mukhin
\cite{DudMuk96, DM} gave a characterization of reducibility using
the terminology of a neutral element (see Theorem \ref{thmDM}).
While their result is essential from a theoretic point of view, it
is not easy to apply it for a given situation unless the function
originally has a neutral element (for further details see also
\cite{GG2016}). Ackerman \cite{A} made a complete characterization
of quasitrivial associative functions. In his paper it was shown
that every quasitrivial associative function is derived from a
binary or a ternary function.

Couceiro and Marichal showed in \cite{CM2012} that continuous
symmetric cancellative and associative $n$-ary functions defined on
a nonempty real interval are reducible (see Remark 4 of
\cite{CM2012}). Although they established reducibility under some
hypotheses that are not related to those of the present paper, it
also shows that reducibility is an important property in the study
of associative $n$-ary functions. Reducibility and
extremality\footnote{The definition of extremality stems from
\cite{Szekely2014}.} of quasitrivial associative symmetric
nondecreasing functions were studied in \cite{DKM}.

The paper is organized as follows. Section \ref{s2} contains the
basic definitions and notation. In Section \ref{s3}.1 we collect the
preliminary results in the case when $F:X^n\to X$ is idempotent,
monotone, associative and has a neutral element. This part is based
on \cite{DM} and \cite{GG2016}.
In Section \ref{s3}.2  we complete the study of reducibility of
quasitrivial nondecreasing associative $n$-ary functions (without
the assumption of symmetry). In Section \ref{s4} we present the main
results about the reducibility of idempotent, nondecreasing,
associative functions. Because of its simplicity we present the
symmetric case with useful lemmas (see Lemma \ref{l1} and \ref{l2})
in Section \ref{s4.1}. In Section \ref{s4.2} we prove the general
result. The main technicality is that we have to divide the proof
into two subcases. Theorem \ref{t1} can be used only for $n=3$, and
another inductive proof (Theorem \ref{t2}) works for $n>3$. In
Section \ref{s5} we discuss extremality which holds in many special
cases but not for every associative idempotent nondecreasing
function.
We also and monotonicity as a relaxation of the property of the
nondecreasingness.

\section{Definitions and notation}\label{s2}
Let $X$ be an arbitrary set and $F:X^n\to X$ an $n$-ary function.
We denote by $S_n$ the symmetric group on the set $\{1, \dots, n\}$.
Now we give a sequence of definitions:
\begin{de}
The function $F:X^n\to X$ is called
\begin{enumerate}[(i)]
\item {\it idempotent} if $F(x,\dots,x)=x$ for every $x\in X$,
\item  {\it symmetric} if $F(x_1, \dots,x_n)=F(x_{\sigma(1)},\dots, x_{\sigma(n)})$ for every $x_1,\dots, x_n \in X$ and every permutation $\sigma\in S_n$,
\item {\it quasitrivial} (or {\it conservative}) if for any $x_1,\dots, x_n\in X$ $$F(x_1, \dots,x_n)\in\{x_1, \dots, x_n\},$$
\item $n$-{\it associative} if for every $x_1, \dots, x_{2n-1}\in X$ and $1
    \le i \le n-1$ we have
    \begin{equation}\label{eqnas}
    \begin{split}
    &F(F(x_1, \dots, x_n),x_{n+1}, \dots, x_{2n-1})= \\ & F(x_1, \dots, x_i
    ,F(x_{i+1}, \dots, x_{i+n}), x_{i+n+1}, \dots, x_{2n-1}).
    \end{split}
    \end{equation}
\end{enumerate}
\end{de}
We usually say that $F:X^n\to X$ is associative and we only write
that $F$ is $n$-associative if we would like to emphasize the number
of variables in $F$.

We say that $F:X^n\to X$ has a {\it neutral element} denoted by
$e\in X$ if for every $x \in X$ and $1 \le i\le n$ we have $F(e,
\dots ,e, x, e, \dots, e) =x$, where $x$ is in the $i$'th coordinate
of $F$.

For any integer $k \ge 0 $  and any $x \in X$, we set $k \cdot x =
\underbrace{x, \dots, x}_{k ~\text{times}}$. For instance,
idempotency of $F$ can be written in the form $F(n\cdot x)=x$.

From now on, $X$ will be a totally ordered set. For any $n\in
\mathbb{N}$ the function $F:X^n\to X$ is called {\it nondecreasing}
(resp. {\it nonincreasing}) if
\begin{equation}\label{eqFF}F(a_1, \dots, a_n)\ge F(b_1,\dots, b_n) ~ (\textrm{resp. } F(a_1, \dots, a_n)\le F(b_1,\dots, b_n)),\end{equation}
for every pair of $n$-tuples $(a_1, \dots, a_n),(b_1,\dots, b_n)$,
where $a_i\ge b_i\in X$ for $1 \le i \le n$.

The function $F$ is called \emph{monotone in the $i$-th variable} if
for all fixed elements $a_1\stb a_{i-1}, a_{i+1} \stb a_n$ of $X$,
the $1$-ary function defined as $$f_i(x):=F(a_1 \stb
a_{i-1},x,a_{i+1} \stb a_n)$$ is nondecreasing or nonincreasing. The
function $F$ is called \emph{monotone} if it is monotone in each of
its variables.

We use the lattice notation for the minimum ($\wedge$) and for
maximum ($\vee$) of a set. Hence we introduce the notation
$$\wedge_{i=1}^n x_i=\min\{x_1, \dots, x_n\},$$
$$\vee_{i=1}^n x_i=\max\{x_1, \dots, x_n\}.$$

\section{Preliminary results}\label{s3}

\begin{de}

We say that $F:X^n\to X$ {\it is derived from} $G:X^2\to X$ if $F$
can be written in the form $$F(x_1, \dots, x_n)=x_1\circ \dots \circ
x_n,$$ where $x\circ y=G(x,y)$. We note that this expression is
well-defined for $n \ge 3$ if and only if $G$ is associative. If
such a $G$ exists, then we say that $F$ is {\it reducible}.
\end{de}

We note that if $n=2$ then the function $F$ is derived from itself.

The previous definition only deals with the existence of a binary
function from which a given $n$-associative function can be derived.
The uniqueness of the binary function follows from certain
conditions.
The following result was proved first in \cite[Proposition
3.5]{DKM}.
\begin{proposition}
Assume that the function $F : X^n\to X$ is associative and
derived from an associative idempotent binary function. 
Then the binary function is unique.
\end{proposition}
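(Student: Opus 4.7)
The plan is to write down an explicit formula, depending only on $F$, that recovers any binary associative idempotent operation from which $F$ is derived; uniqueness then follows immediately.

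Suppose $F$ is derived from two such operations $G_1$ and $G_2$, and write $x \circ_i y$ for $G_i(x,y)$. The first step is the observation that, for each $i \in \{1,2\}$ and each $y \in X$, any iterated $\circ_i$-product of copies of $y$ collapses to $y$: for every integer $k \geq 1$,
$$\underbrace{y \circ_i y \circ_i \cdots \circ_i y}_{k \text{ copies}} = y.$$
This is immediate by induction on $k$, using $y \circ_i y = y$ (idempotency of $G_i$) in the base case and the associativity of $G_i$ in the inductive step to regroup the product.

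Applying this identity with $k = n-1$ together with the definition of derivation yields, for all $x, y \in X$,
$$F(x, (n-1)\cdot y) \;=\; x \circ_i (y \circ_i \cdots \circ_i y) \;=\; x \circ_i y \;=\; G_i(x,y).$$
Since the left-hand side depends only on $F$ and not on the choice of $i$, this forces $G_1(x,y) = G_2(x,y)$ for every $x,y \in X$, so $G_1 = G_2$ as binary operations. The case $n=2$ is handled trivially, since then $F$ coincides with $G$ by definition.

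There is no real obstacle. The entire content of the argument is the recovery formula $G(x,y) = F(x, (n-1)\cdot y)$; the idempotency of $G$ is precisely what is needed to make this formula valid, and associativity of $G$ is what lets us regroup the $n-1$ copies of $y$ before applying idempotency. Neither the associativity of $F$ nor any structure on $X$ is needed beyond what is already supplied by the hypotheses.
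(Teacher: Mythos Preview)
Your argument is correct: the recovery formula $G(x,y)=F(x,(n-1)\cdot y)$, justified by idempotency and associativity of $G$, pins down $G$ uniquely from $F$. Note, however, that the paper does not give its own proof of this proposition---it simply cites \cite[Proposition 3.8]{DKM}---so there is no in-text argument to compare against; that said, your formula is exactly the one the paper uses repeatedly elsewhere (see \eqref{eqxy}, \eqref{eqG}, \eqref{eq1170}), so your approach is fully in line with the paper's methods.
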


In our case, when $X$ is a totally ordered set and $F$ is monotone,
we can strengthen the previous statement. The result
presented here follows from \cite[Lemma 3.4]{GG2016} 
when $F$ is chosen to be monotone.
\begin{proposition}\label{pGG2}
Let $X$ be a totally ordered set and $F:X^n\to X$ an associative
idempotent monotone function, which is derived from an associative
binary function $G$. Then $G$ is idempotent as well.
\end{proposition}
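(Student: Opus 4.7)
The plan is to fix $a \in X$, set $b := G(a,a) = a \circ a$, and show $b = a$. The case $n=2$ is immediate since $F = G$, so I assume $n \ge 3$ and, for contradiction, that $b \ne a$.

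First I exploit associativity and idempotence algebraically. The iterated product $a^{(k)} := a \circ a \circ \cdots \circ a$ ($k$ factors) is unambiguous by associativity, and idempotence of $F$ gives $a^{(n)} = F(a,\ldots,a) = a$. These combine to yield the periodic identity
\[
a^{(n+j)} = a^{(n)} \circ a^{(j)} = a \circ a^{(j)} = a^{(j+1)} \qquad (j \ge 0).
\]
Next I introduce the hybrid products
\[
F_k := F(\underbrace{b,\ldots,b}_{k},\underbrace{a,\ldots,a}_{n-k}) \qquad (0 \le k \le n).
\]
Since $b = a^{(2)}$, associativity gives $F_k = a^{(2k)} \circ a^{(n-k)} = a^{(n+k)}$, and the periodic identity then produces
\[
F_0 = a, \quad F_1 = b, \quad F_{n-1} = a^{(2n-1)} = a, \quad F_n = a^{(2n)} = b.
\]

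Monotonicity closes the argument. The passage $F_{k-1} \to F_k$ alters only the $k$th coordinate, from $a$ to $b$. Specialising the monotonicity of $F$ in variable $k$ to the slice in which every other coordinate equals $a$ produces a one-variable map sending $a \mapsto a$ and $b \mapsto b$ (for the second value, $a^{(k-1)} \circ b \circ a^{(n-k)} = a^{(n+1)} = b$). Since $a \ne b$ in the totally ordered set $X$, this slice is order-preserving, so the uniform direction of $F$ in variable $k$ is order-preserving. As this holds for every $k$, $F$ is nondecreasing globally, and hence $(F_k)_{k=0}^{n}$ is a monotone sequence in $k$. But $F_0 = F_{n-1} = a$ and $F_1 = b \ne a$, and the indices $0,1,n-1$ are distinct since $n \ge 3$---contradicting the monotonicity of $(F_k)$.

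The main obstacle I anticipate is upgrading the mere hypothesis of monotonicity to outright nondecreasingness; the diagonal-slice trick above accomplishes this by noting that idempotence of $F$, together with the periodic identity $a^{(n+1)} = b$, forces the slice to send both $a$ and $b$ to themselves, and this is already enough to pin down the uniform direction of a monotone function.
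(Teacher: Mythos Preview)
The paper does not prove this proposition directly; it simply cites \cite[Lemma 3.4]{GG2016}. Your argument is a self-contained direct proof, and it is correct. The key algebraic input is the periodicity $a^{(n+j)}=a^{(j+1)}$ (from $a^{(n)}=a$), which yields $F_k=a^{(k+1)}$ and in particular $F_0=F_{n-1}=a$ while $F_1=b$; once $F$ is known to be nondecreasing, the monotone sequence $(F_k)$ with $F_0=F_{n-1}=a$ and $F_1=b\ne a$ (indices $0<1<n-1$ distinct because $n\ge 3$) gives the contradiction.

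One point deserves comment. Your step ``so the uniform direction of $F$ in variable $k$ is order-preserving'' presumes that monotonicity in the $k$-th variable carries a single direction independent of the remaining coordinates. The paper's written definition literally places the disjunction inside the universal quantifier, which would allow the direction to vary from slice to slice. Under that weaker reading the proposition is in fact false: on $X=\{0,1\}$ with $G(x,y)=x+y\pmod 2$, the ternary $F(x,y,z)=x+y+z\pmod 2$ is associative, idempotent, and monotone in each variable on every slice, yet $G$ is not idempotent. So your uniform-direction reading is the one the authors (and \cite{GG2016}) must intend, and with it your diagonal-slice argument---showing that the section $x\mapsto F(a,\ldots,x,\ldots,a)$ fixes both $a$ and $b$, hence cannot be order-reversing---legitimately upgrades monotonicity to global nondecreasingness.
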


Combining the previous statements we get:

\begin{corollary}\label{corU}
Let $X$ be a totally ordered set. Let $F$ be an associative
idempotent monotone function which derived from a binary function
$G:X^2\to X$,
then 
$G$ is uniquely determined by $F$.
\end{corollary}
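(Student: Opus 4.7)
The plan is to chain together the two propositions stated immediately above the corollary. The first observation is that since $F$ is assumed to be derived from a binary $G$, the very definition of ``derived from'' forces $G$ to be associative, because the iterated expression $x_1 \circ \cdots \circ x_n$ is well-defined only under associativity. So associativity of $G$ comes for free from the hypothesis, and does not need a separate argument.

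Next, I would apply Proposition \ref{pGG2}. Its hypotheses — $X$ totally ordered, together with an associative, idempotent, monotone $F$ derived from an associative binary $G$ — match exactly the hypotheses of Corollary \ref{corU}, so its conclusion applies and yields that $G$ is idempotent.

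At this point $G$ is an associative, idempotent binary operation from which the associative $F$ is derived. This is precisely the setup of the first proposition of this section (taken from \cite[Proposition 3.8]{DKM}), whose conclusion is exactly that such a $G$ is unique. Therefore $G$ is uniquely determined by $F$. There is no genuine obstacle in this argument: the corollary is obtained by composing the two previously stated propositions, and the only work is the bookkeeping check that the hypotheses align. In particular, the monotonicity assumption is used only indirectly, as the input to Proposition \ref{pGG2} that upgrades $G$ from merely associative to idempotent.
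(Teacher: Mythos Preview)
Your proposal is correct and matches the paper's approach exactly: the corollary is stated with no explicit proof, only the remark ``Combining the previous statements we get,'' and the two statements being combined are precisely Proposition~\ref{pGG2} (to obtain idempotency of $G$) and the proposition from \cite[Proposition~3.8]{DKM} (to obtain uniqueness). Your observation that associativity of $G$ is built into the definition of ``derived from'' is also consistent with the paper's convention.
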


\subsection{Neutral element}\label{s3.2}
Suppose that $F:X^n \to X$ is an associative function having the
neutral element $e\in X$. Then one can define $G:X^2\to X$ by
\begin{equation}\label{eqf2}
 G(a,b)=F(a,(n-2)\cdot e, b)\end{equation}
for every $a,b\in X$. The following theorem of Dudek and Mukhin
\cite{DM} shows a general result for an arbitrary set $X$.

\begin{theorem}\label{thmDM}
Let $X$ be a nonempty set. Let $F: X^n\to X$ be an associative
function. Then $F$ is derived from a binary function $G$ if and only
if $F$ has a neutral element or one can adjoin\footnote{Adjoining an
element to an $n$-associative function $F$ means to define an
$n$-associative function $\bar{F}$ defined on $X \cup \{ e\} $,
where $e$ is a neutral element for $\bar{F}$ such that $\bar{F}(x_1
, \ldots , x_n)=F(x_1 , \ldots , x_n)$ for every $x_1 , \ldots , x_n
\in X$.} a neutral element to $X$ for $F$. In this case such a $G$
can be defined by \eqref{eqf2}.
\end{theorem}
We note that the previous statement also holds for $n=2$. Indeed, every associative binary function is reducible and if an associative function $F$ has no neutral element, then we can adjoin one. Let $e\not\in X$ be an element and let $\bar{F}$ be defined as $\bar{F}(x,y)=F(x,y)$ for $x,y\in X$ and $\bar{F}(z,e)=\bar{F}(e,z)=z$ for every $z\in X\cup \{e\}$. It is easy to check that $\bar{F}$ is associative on $X\cup \{e\}$. 

The following statement was proved  in \cite[Proposition
3.13]{GG2016} applying the previous structural theorem.
 \begin{proposition}\label{pGG}
    Let $X$ be a totally ordered set and $F:X^n\to X$ an associative monotone 
    idempotent function with a neutral element $e$.
    Let $G$ be defined by \eqref{eqf2}. \\
    Then $F$ is derived from the binary function
    $G$, which is also associative, idempotent, monotone and has the same neutral element $e$.
 \end{proposition}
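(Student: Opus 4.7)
My plan is to reduce the statement to two results we already have available: Theorem \ref{thmDM} and Proposition \ref{pGG2}. Since $F$ is associative with neutral element $e$, Theorem \ref{thmDM} directly gives that $F$ is derived from the binary operation $G$ defined by \eqref{eqf2}; in particular $G$ is associative, since the very notion of ``$F$ is derived from $G$'' requires the iterated expression $x_1\circ\cdots\circ x_n$ to be well-defined. After this, only three properties of $G$ remain to be verified: that it has $e$ as a neutral element, that it is monotone, and that it is idempotent.

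The first two are immediate calculations. Using neutrality of $e$ for $F$ I would compute
\[
G(a,e) = F(a,(n-2)\cdot e, e) = F(a,(n-1)\cdot e) = a,
\]
and symmetrically $G(e,a)=a$, so $e$ is a neutral element of $G$. Monotonicity of $G$ is equally immediate: $G$ is obtained from $F$ by fixing $n-2$ of its arguments to the constant $e$, and monotonicity in each variable is obviously preserved under such a restriction.

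For idempotency of $G$ I would not attempt a direct calculation, since $G(a,a)=F(a,(n-2)\cdot e, a)$ is not transparently equal to $a$ from idempotency and neutrality of $F$ alone. Instead I would invoke Proposition \ref{pGG2}: after applying Theorem \ref{thmDM} we are exactly in its hypothesis --- an associative, idempotent, monotone function $F$ on a totally ordered set, derived from an associative binary operation $G$ --- and its conclusion is that $G$ is idempotent. The only substantive obstacle is therefore idempotency of $G$, and Proposition \ref{pGG2} has already handled it; the rest of the proof is assembly.
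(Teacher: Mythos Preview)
Your proposal is correct. The paper does not actually give a proof of this proposition; it simply records that the result was established in \cite[Proposition 3.13]{GG2016} ``applying the previous structural theorem'' (i.e., Theorem \ref{thmDM}). Your argument is precisely a fleshed-out version of that remark: you invoke Theorem \ref{thmDM} to obtain reducibility and associativity of $G$, check the neutral element and monotonicity by direct inspection of \eqref{eqf2}, and then appeal to Proposition \ref{pGG2} (which is \cite[Lemma 3.4]{GG2016}) for idempotency. This matches the logical dependencies suggested by the paper and by the numbering in \cite{GG2016}, so there is no circularity and nothing to add.
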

Since every monotone, idempotent associative binary function is
nondecreasing by \cite[Lemma 3.10]{GG2016}, the previous statement
immediately has a simple consequence.
\begin{corollary}\label{cGG}
Let $X$ and $F$ as in Proposition \ref{pGG}. Then $F$ is
nondecreasing.
\end{corollary}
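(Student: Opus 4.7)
The plan is to reduce the $n$-ary statement to the binary case and then propagate nondecreasingness through the iterated composition. First, I would invoke Proposition \ref{pGG} to produce a binary associative, monotone, idempotent function $G:X^2 \to X$ with neutral element $e$ such that $F$ is derived from $G$, i.e.\
$$F(x_1,\dots,x_n) = (\cdots((x_1 \circ x_2) \circ x_3) \cdots) \circ x_n, \qquad x\circ y := G(x,y).$$

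Next, I would apply the cited result \cite[Lemma 3.10]{GG2016}, which asserts that any associative, monotone, idempotent binary operation on a chain is automatically nondecreasing. Applying this to $G$ yields that $G$ is nondecreasing in each of its two arguments.

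Finally, a short induction on $n$ would complete the argument: if $(a_1,\dots,a_n) \ge (b_1,\dots,b_n)$ coordinatewise, then $a_1 \circ a_2 \ge b_1 \circ b_2$ because $G$ is nondecreasing in both arguments; composing with $a_3 \ge b_3$ on the right gives $(a_1 \circ a_2) \circ a_3 \ge (b_1 \circ b_2) \circ b_3$; iterating yields $F(a_1,\dots,a_n) \ge F(b_1,\dots,b_n)$.

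There is no real obstacle here, since the corollary is essentially a packaging of Proposition \ref{pGG} with the cited binary lemma. The only conceptual point worth noting is that ``monotone'' a priori allows each variable of $G$ to be either order-preserving or order-reversing; it is precisely \cite[Lemma 3.10]{GG2016} that, using idempotence and associativity on a chain, rules out the order-reversing possibility and hence fixes the correct sign.
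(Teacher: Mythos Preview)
Your proof is correct and follows exactly the paper's approach: the paper simply states that since every monotone, idempotent, associative binary function is nondecreasing by \cite[Lemma 3.10]{GG2016}, the corollary is an immediate consequence of Proposition~\ref{pGG}. Your version just spells out the easy induction step that the paper leaves implicit.
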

\begin{observation}
Let $X$ and $F$ as in Proposition \ref{pGG}. If $F$ is symmetric,
then $G$ defined by \eqref{eqf2} is also symmetric.
\end{observation}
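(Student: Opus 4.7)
The plan is to derive the symmetry of $G$ directly from the symmetry of $F$, without invoking associativity, idempotency, or the neutral element axiom (beyond what is already packaged into the definition of $G$). By \eqref{eqf2},
\[
G(a,b)=F(a,\underbrace{e,\ldots,e}_{n-2},b),
\]
so the two variables of $G$ occupy the first and last coordinates of $F$ while all middle coordinates are the constant $e$. If one applies the transposition $\tau=(1\ n)\in S_n$ to the argument tuple, the tuple $(a,e,\ldots,e,b)$ is sent to $(b,e,\ldots,e,a)$, which by definition is the argument tuple defining $G(b,a)$.

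Since $F$ is symmetric, $F(a,e,\ldots,e,b)=F(b,e,\ldots,e,a)$, and therefore $G(a,b)=G(b,a)$. There is no real obstacle here; the observation amounts to reading off symmetry in the outer two coordinates of $F$, which is a special case of full symmetry. One might note in passing that the same argument works verbatim for any pair of coordinates, so the choice of placing $a$ and $b$ at positions $1$ and $n$ in \eqref{eqf2} is immaterial to the conclusion.
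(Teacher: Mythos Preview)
Your argument is correct and is exactly the one-line verification the paper has in mind; the paper states this as an Observation without proof, and your use of the transposition $(1\ n)$ together with the symmetry of $F$ is the intended justification.
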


Lemma \ref{lemmaNeQt} shows a connection between the existence of a
neutral element and quasitriviality. The base of the idea appears in
the Czoga\l a-Drewniak's theorem \cite{Czogala1984} where
$X=[0,1]$. For the sake of completeness we present a short proof here. 

\begin{lemma}\label{lemmaNeQt}
Let $X$ be a totally ordered set and $F:X^n\to X$ an associative,
idempotent, monotone function having a neutral element $e$. Then $F$
is quasitrivial.
\end{lemma}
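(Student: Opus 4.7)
The plan is to reduce the $n$-ary claim to the binary case and then prove quasitriviality of that binary operation by a case analysis around the neutral element.

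By Corollary \ref{cGG}, $F$ is nondecreasing, and by Proposition \ref{pGG}, $F$ is derived from a binary associative, idempotent, nondecreasing function $G\colon X^2\to X$ with the same neutral element $e$. Since $F(x_1,\ldots,x_n)$ is then an iterated evaluation of $G$, once $G$ is shown to be quasitrivial a one-line induction on $n$ transfers the property to $F$. So the task reduces to showing $G(a,b)\in\{a,b\}$ for all $a,b\in X$.

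The two same-side situations are immediate from monotonicity and the neutral element. If $a\le b\le e$, then
\[
a=G(a,a)\le G(a,b)\le G(a,e)=a,
\]
so $G(a,b)=a$; the other orderings on each side are analogous, so $G$ coincides with $\min$ on $\{x\in X:x\le e\}$ and with $\max$ on $\{x\in X:x\ge e\}$. For the mixed case, say $a\le e\le b$, I set $c:=G(a,b)$, so monotonicity already gives $a\le c\le b$. The decisive step is to apply associativity with a repeated entry and then collapse by idempotency:
\[
G(a,c)=G(a,G(a,b))=G(G(a,a),b)=G(a,b)=c,\qquad G(c,b)=G(G(a,b),b)=G(a,G(b,b))=c.
\]
If $c\le e$ then $a\le c\le e$ is a same-side configuration, so $G(a,c)=a$ by the previous paragraph, forcing $c=a$; if $c\ge e$ then $e\le c\le b$ analogously forces $G(c,b)=b$, and hence $c=b$. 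The orientation $b\le e\le a$ is treated identically, using the corresponding mirror-image identities.

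The only real obstacle is bridging the mixed case to the same-side ones, and the idempotency-inside-associativity trick above does exactly that: the fixed-point identities $G(a,c)=c$ and $G(c,b)=c$ contradict the same-side formulas unless $c\in\{a,b\}$.
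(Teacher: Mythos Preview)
Your proof is correct and follows essentially the same route as the paper's: reduce to the binary case via Proposition~\ref{pGG} and Corollary~\ref{cGG}, handle the same-side configurations by monotonicity against the neutral element, and resolve the mixed case by the associativity--idempotency fixed-point trick $G(a,G(a,b))=G(a,b)$ combined with the same-side formulas. The only cosmetic difference is that the paper first proves the $n=2$ case for $F$ directly and only afterward reduces $n>2$ to it, whereas you pass to $G$ at the outset.
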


\begin{proof}
By Corollary \ref{cGG}, we can automatically assume that $F$ is
nondecreasing.
For $n=2$ and $x,y\in X$, we distinguish two different cases:
\begin{enumerate}
\item ($x\le e,y\le e$) or  ($e\le x, e\le y$),
\item ($x\le e\le y$) or ($y\le e\le x$).
\end{enumerate}
We show that in each case $F(x,y)$ is either the maximum or the
minimum, thus it is quasitrivial. In Case 1 if $x\le e,y\le e$, then
by the nondecreasingness of $F$ we get  \begin{align*}\label{eqal1}
    x=F(x,e)&\ge F(x,y)\\
    y=F(e,y)&\ge F(x,y).
\end{align*}
Thus $x\wedge y\ge F(x,y)$.

On the other hand if $x\le y$ (the case $x\ge y$ can be handled
similarly), then
$$x=F(x,x)\le F(x,y)\le F(y,y)=y,$$
by monotonicity and idempotency. This implies that $F(x,y)=x\wedge
y$.

Similarly if $e\le x, e\le y$, it can be obtained that $F(x,y)=x\vee
y$.

In Case 2 the two subcases can be handled similarly. Now we deal
with $x\le e\le y$. we denote $F(x,y)=\theta$. Assume that
$x\le\theta\le e \le y$, then using associativity, we get
\begin{equation}
    F(x,\theta)=F(x,F(x,y))=F(F(x,x), y)=F(x,y)=\theta
\end{equation}
On the other hand, since $x\le e, \theta\le e$, we have already
proved that $$F(x,\theta)=x\wedge\theta=x.$$ This shows that
$\theta=x$. For  $x\le e\le \theta \le y$ similarly we have
$$\theta=F(\theta,y)=y.$$
Thus we get that the binary function $F$ is quasitrivial.

If $n>2$ and $F$ is an $n$-associative idempotent non-decreasing and
have a neutral element, then we can use Proposition \ref{pGG}. Thus
there exists a binary function $G$ which is associative, idempotent,
non-decreasing and have a neutral element. By the case $n=2$ we know
that $G$ is quasitrivial and, since $F$ is derived from $G$, $F$ is
also quasitrivial.
\end{proof}

\subsection{Quasitriviality}\label{s3.3}
In \cite[Theorem 3.3 and Corollary 3.4]{DKM}  the authors proved the
following characterization for quasitrivial symmetric nondecreasing
and associative functions.
\begin{theorem}\label{propQS}
Let $X$ be a totally ordered set and let $F\colon X^n\to X$ be a
quasitrivial symmetric nondecreasing associative function. Then $F$
is reducible. More precisely, $F$ is derived from $G\colon X^2\to X$
defined by \begin{equation}\label{eqxy} G(x,y)=F((n-1)\cdot
x,y)=F(x,(n-1)\cdot y).
\end{equation}
\end{theorem}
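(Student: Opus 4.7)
The plan is to set $G(x,y) := F((n-1)\cdot x, y)$ and to establish in three stages that (a) $G(x,y) = F(x,(n-1)\cdot y)$, (b) $G$ is associative (inheriting quasitriviality, symmetry, and nondecreasingness from $F$), and (c) $F$ is derived from $G$. Throughout I would use that quasitriviality forces idempotency $F(n\cdot x)=x$.

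For (a), I would apply $n$-associativity at $i=n-1$ to the $(2n-1)$-tuple $((n-1)\cdot x,\,y,\,(n-1)\cdot y)$, which yields
\[
F\bigl(F((n-1)\cdot x, y), (n-1)\cdot y\bigr) \;=\; F\bigl((n-1)\cdot x, F(y,(n-1)\cdot y)\bigr) \;=\; F((n-1)\cdot x, y),
\]
the last step by idempotency. Writing $a := F((n-1)\cdot x, y)\in\{x,y\}$, this reads $F(a,(n-1)\cdot y)=a$, which handles the case $a=x$. The case $a=y$ is obtained by swapping the roles of $x$ and $y$ and invoking symmetry of $F$.

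For (b), a case analysis on $G(x,y)\in\{x,y\}$ and $G(y,z)\in\{y,z\}$ reduces associativity of $G$ to the transitivity claim that $G(x,y)=x$ and $G(y,z)=y$ imply $G(x,z)=x$ (together with the analogous statement with outputs flipped). To verify it, I would use (a) to rewrite $F((n-1)\cdot y,z)=y$ as $F(y,(n-1)\cdot z)=y$, then apply $n$-associativity at $i=n-1$ to the tuple $((n-1)\cdot x,\,y,\,(n-1)\cdot z)$; the two sides evaluate to $F(x,(n-1)\cdot z)$ and $F((n-1)\cdot x, y)=x$, giving $F(x,(n-1)\cdot z)=x$, which (a) converts back into $G(x,z)=x$. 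The flipped case is identical, so $G$ is associative.

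For (c), the relation $y\preceq x :\Leftrightarrow G(x,y)=x$ becomes, thanks to (b), a total preorder on $X$ which is a strict total order on distinct elements (since $G$ is quasitrivial and symmetric). Applying $n$-associativity at $i=n-1$ to $(x_1,\ldots,x_n,(n-1)\cdot x_n)$ and using idempotency yields
\[
G\bigl(F(x_1,\ldots,x_n),x_n\bigr) = F\bigl(F(x_1,\ldots,x_n),(n-1)\cdot x_n\bigr) = F\bigl(x_1,\ldots,x_{n-1},F(n\cdot x_n)\bigr) = F(x_1,\ldots,x_n),
\]
so $x_n \preceq F(x_1,\ldots,x_n)$; by symmetry of $F$ the same holds for every $x_i$. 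Combined with quasitriviality, $F(x_1,\ldots,x_n)$ must equal the unique $\preceq$-maximum of $\{x_1,\ldots,x_n\}$, which is also what the iterated composition $G(G(\ldots G(x_1,x_2),x_3),\ldots,x_n)$ computes. The hard part will be the transitivity argument in (b), where step (a) is essential to align the arguments of $F$ before invoking $n$-associativity.
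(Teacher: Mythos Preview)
Your argument for step (a) does not close. From the identity $F(a,(n-1)\cdot y)=a$ with $a=F((n-1)\cdot x,y)\in\{x,y\}$ you correctly handle $a=x$, but ``swapping the roles of $x$ and $y$ and invoking symmetry'' only yields the implication $F(x,(n-1)\cdot y)=y\Rightarrow F((n-1)\cdot x,y)=y$, which is the converse of what you need. The residual case $F((n-1)\cdot x,y)=y$ and $F(x,(n-1)\cdot y)=x$ is not excluded by your reasoning, and without the nondecreasing hypothesis it can genuinely occur: on $X=\{0,1\}$ with $n=3$, the function $F(x,y,z)=x+y+z\pmod 2$ is quasitrivial, symmetric, associative and idempotent, yet $F(0,0,1)=1$ while $F(0,1,1)=0$. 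Notice that you never invoke nondecreasingness anywhere in your proposal; this is precisely where it must enter. Once you assume $x\le y$, the coordinatewise comparison of $((n-1)\cdot x,y)$ and $(x,(n-1)\cdot y)$ gives $F((n-1)\cdot x,y)\le F(x,(n-1)\cdot y)$, ruling out the bad case. (This is exactly the content of Lemma~\ref{l1}, proved in the paper from idempotency, associativity and nondecreasingness alone.)

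With (a) repaired, your steps (b) and (c) are correct. The paper itself does not prove Theorem~\ref{propQS}; it is quoted from \cite{DKM}. Your route via the total preorder $y\preceq x\Leftrightarrow G(x,y)=x$ and the observation that $F$ selects the $\preceq$-maximum is in fact the same idea underlying the extremality formula \eqref{eqext1}; it is a clean self-contained argument once (a) is available.
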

It is easy to see that function $G$ defined by \eqref{eqxy} is
quasitrivial,    symmetric and nondecreasing. In \cite[Theorem
3.3]{DKM} it was also proved that in this case
\begin{equation}\label{eqext1}
F(x_1, \dots, x_n)=G(\wedge_{i=1}^n x_i, \vee_{i=1}^n x_i).
\end{equation}
This means that $F$ is extremal (see Definition \ref{deext}).

One can prove that $F$ remains reducible if we eliminate the
symmetry condition of $F$. The result is weaker in the sense that it
only shows the existence of such a decomposition (see Theorem
\ref{thmQA}). We note that the analogue of \eqref{eqext1} does not hold (for further details see Section \ref{SecExt}). 

The following result is an easy consequence of \cite[Theorem 1.4]{A}
using the statement therein for $A_2=\emptyset$.

\begin{theorem}\label{thmAkk}
Let $X$ be an arbitrary set. Suppose $F:X^n\to X$ is a quasitrivial
$n$-associative function. If $F$ is not derived from a binary
function, then $n$ is odd and there exist $b_1 , b_2$ $(b_1\ne b_2)$
such that for any $a_1 \stb a_n \in \{b_1, b_2\}$
\begin{equation}\label{eqbk}
    F(a_1, \dots, a_n)=b_i~~(i=\{1,2\}),
\end{equation} where $b_i$ occurs odd number of times. 
\end{theorem}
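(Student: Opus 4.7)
The plan is to derive the statement directly from Ackerman's classification theorem \cite[Theorem 1.4]{A}. Ackerman describes every quasitrivial $n$-associative operation on an arbitrary set $X$ in terms of a decomposition controlled by two subsets $A_1, A_2 \subseteq X$; roughly speaking, $A_2$ isolates precisely those elements sitting in a $2$-element orbit on which $F$ exhibits the ``odd-count'' behaviour that can arise only when $n$ is odd. The portion of Ackerman's theorem corresponding to $A_2=\emptyset$ characterises exactly when such an $F$ is derived from a binary associative operation, and this is the statement I shall invoke.

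The argument proceeds in three short steps. First, observe that since $n$ is odd, in every tuple $(a_1,\dots,a_n)\in\{b_1,b_2\}^n$ exactly one of $b_1,b_2$ appears an odd number of times, so our hypothesis determines the restriction $F|_{\{b_1,b_2\}^n}$ unambiguously and it is exactly the ``odd-count selector''. Second, verify that this restriction is precisely the behaviour Ackerman uses to place a $2$-element subset into $A_2$; in particular $\{b_1,b_2\}\subseteq A_2$, so $A_2\neq\emptyset$. Third, apply the $A_2=\emptyset$ statement of \cite[Theorem 1.4]{A} contrapositively: since $A_2$ is nonempty and $n$ is odd, $F$ cannot be derived from any binary associative operation $G$.

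The only real obstacle is bookkeeping and a careful match of conventions: one must line up the ``orbit'' or ``block'' structure used by Ackerman with the elementary hypothesis given in the theorem and confirm that the pair $\{b_1,b_2\}$ genuinely witnesses $A_2\neq\emptyset$ in his decomposition. Once that translation is performed the conclusion is automatic, which is why the paper records the result as an easy consequence of \cite[Theorem 1.4]{A}; no further combinatorial computation is required.
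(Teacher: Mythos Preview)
Your proposal is correct and matches the paper's own treatment exactly: the paper does not give an independent proof of this theorem but simply records it as ``an easy consequence of \cite[Theorem 1.4]{A} using the statement therein for $A_2=\emptyset$.'' Your sketch---identifying $\{b_1,b_2\}$ as a witness that $A_2\neq\emptyset$ in Ackerman's decomposition and then reading off the non-reducibility from the $A_2=\emptyset$ characterisation---is precisely the translation the paper has in mind.
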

As a consequence of this theorem we prove the following:
\begin{theorem}\label{thmQA}
Let $X$ be a totally ordered set and let $F\colon X^n\to X$ be an
associative quasitrivial nondecreasing function. Then $F$ is
reducible.
\end{theorem}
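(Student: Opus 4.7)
The plan is to derive Theorem \ref{thmQA} directly from Ackerman's characterization (Theorem \ref{thmAkk}). That theorem tells us that a quasitrivial, $n$-associative function fails to be reducible only in the following very specific situation: $n$ is odd, and there exist two distinct elements $b_1,b_2 \in X$ such that whenever the arguments $a_1,\dots,a_n$ all come from $\{b_1,b_2\}$, the value $F(a_1,\dots,a_n)$ is the one of $b_1,b_2$ that appears an odd number of times among the $a_i$'s. So my task is to show that under the additional hypothesis of nondecreasingness, this exceptional configuration is impossible.

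Suppose for contradiction that $F$ is quasitrivial, associative, nondecreasing, and not reducible. Apply Theorem \ref{thmAkk} to obtain $b_1 \neq b_2$ as above; without loss of generality assume $b_1 < b_2$. I will compare the two specific tuples
\[
\underline{u} = (b_1,\, b_1,\, \dots,\, b_1,\, b_2) \quad \text{and} \quad \underline{v} = (b_1,\, b_2,\, b_2,\, \dots,\, b_2),
\]
each of length $n$. In $\underline{u}$, the element $b_2$ occurs exactly once, which is odd (since $1$ is odd regardless of the parity of $n$), so the exceptional rule forces $F(\underline{u}) = b_2$. Symmetrically, in $\underline{v}$ the element $b_1$ occurs exactly once, so $F(\underline{v}) = b_1$. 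On the other hand, coordinatewise we have $\underline{v} \ge \underline{u}$: the first and last coordinates agree, while each intermediate coordinate of $\underline{v}$ equals $b_2 \ge b_1$. Hence nondecreasingness of $F$ yields $b_1 = F(\underline{v}) \ge F(\underline{u}) = b_2$, contradicting $b_1 < b_2$.

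Therefore the exceptional case of Theorem \ref{thmAkk} cannot arise, and $F$ must be derived from some binary associative function $G$, proving reducibility.

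The main (and in fact only) obstacle here is having the right form of Ackerman's characterization available; once it is in hand, the argument is a very short monotonicity calculation. Note that this only establishes the existence of such a $G$ and gives no canonical formula for it — in contrast to the symmetric case in Theorem \ref{propQS}, where the explicit extremal formula \eqref{eqext1} holds. The remark following Theorem \ref{thmQA} already warns that the extremality identity fails in the nonsymmetric setting, so I would not attempt to strengthen the conclusion here.
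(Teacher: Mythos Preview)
Your argument is correct and follows essentially the same approach as the paper: assume non-reducibility, invoke Ackerman's characterization to obtain the exceptional pair $b_1<b_2$, and then exhibit a monotonicity violation among tuples with entries in $\{b_1,b_2\}$. The only cosmetic difference is the choice of witness tuples---the paper compares $(n\cdot b_1)$, $(b_2,(n-1)\cdot b_1)$ and $(2\cdot b_2,(n-2)\cdot b_1)$ to get $b_1\le b_2\le b_1$, whereas you compare $((n-1)\cdot b_1,b_2)$ with $(b_1,(n-1)\cdot b_2)$ directly; both produce the same contradiction.
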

\proof By contradiction we assume that $F$ is not derived from a
binary function. Now we apply the previous theorem since we intend
to show that in this case the conditions for $b_1, b_2$ cannot be
satisfied. Thus every associative, quasitrivial, nondecreasing
function defined on a totally ordered set $X$ is reducible.

According to Theorem \ref{thmAkk}, if $F$ is not reducible, then $n$
is odd. Hence $n\ge 3$ and there exist $b_1, b_2$ satisfying
equation \eqref{eqbk}.
Since $b_1\ne b_2$, 
we may assume that $b_1<b_2$ (the case $b_2<b_1$ can be handled
similarly). By our assumption on $b_1$ and $b_2$ we have
\begin{equation}\label{eq:b1b2}
F(n \cdot b_1)=b_1, ~ F(b_2, (n-1) \cdot b_1)=b_2, ~ F(2 \cdot b_2,
(n-2)\cdot b_1)=b_1.
\end{equation}
Since $F$ is nondecreasing we have $$ F(n \cdot b_1) \le F(b_2,
(n-1) \cdot b_1)\le F(2\cdot b_2, (n-2)\cdot b_1).$$ This implies
$b_1=b_2$, a contradiction. \qed

\section{Main results}\label{s4}
In this section we prove that every associative idempotent
nondecreasing function defined on a totally ordered set $X$ is
derived from a binary function $G$. As it was shown in Corollary
\ref{corU}, $G$ is also unique. This result generalizes some of the
previous results on reducibility. As a consequence of Theorem
\ref{thmDM} this means that if an associative idempotent
nondecreasing function $F$ defined on a totally ordered set $X$,
then either there is a neutral element of $F$ or we can adjoin an
element to $X$ which acts as a neutral element of $F$. We note that
all of our statements also hold for $n=2$ but bring no information
in this case. Practically, we just deal with the cases when $n\ge
3$.

\subsection{Symmetric case}\label{s4.1}
The symmetric case (as usual) is much simpler than the general one
but we present a separate argument here. Our result is based on the
following two lemmas.
\begin{lemma}\label{l1}
Let $X$ be a totally ordered set and $F:X^n\to X$ an associative
nondecreasing idempotent function. Then for every $a,c\in X$
$$F(a, (n-1)\cdot c)=F((n-1)\cdot a, c).$$
\end{lemma}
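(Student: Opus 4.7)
The plan is to derive the identity as a squeeze, proving $b_1 \le b_2$ and $b_1 \ge b_2$ separately, where I write $b_1 := F(a,(n-1)\cdot c)$ and $b_2 := F((n-1)\cdot a, c)$. Without loss of generality I would assume $a \le c$; the reverse case follows by the same argument with inequalities flipped.

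First I would introduce the scale $B_k := F(k\cdot a, (n-k)\cdot c)$ for $0 \le k \le n$. By idempotency $B_0 = c$ and $B_n = a$, and since $F$ is nondecreasing and $a \le c$, passing from $B_k$ to $B_{k+1}$ replaces one $c$-coordinate by an $a$-coordinate and so weakly decreases the value. Thus $B_0 \ge B_1 \ge \dots \ge B_n$, and in particular $b_1 = B_1 \ge B_{n-1} = b_2$.

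For the reverse inequality I would invoke associativity on two carefully chosen $(2n-1)$-tuples. Applied to $((n-1)\cdot a,\, c,\, (n-1)\cdot c)$, grouping the first $n$ entries evaluates the outer expression to $F(b_2,(n-1)\cdot c)$, while grouping the last $n$ entries evaluates it to $F((n-1)\cdot a,\, F(n\cdot c)) = b_2$, yielding $F(b_2,(n-1)\cdot c) = b_2$. Applied to $(n\cdot a,\, (n-1)\cdot c)$, grouping positions $1$ through $n$ evaluates to $F(F(n\cdot a),(n-1)\cdot c) = b_1$, while grouping positions $2$ through $n+1$ evaluates to $F(a,\, F((n-1)\cdot a, c),\, (n-2)\cdot c) = F(a, b_2, (n-2)\cdot c)$, yielding $F(a, b_2, (n-2)\cdot c) = b_1$. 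Since $a \le b_2 \le c$ (the bounds follow by applying the nondecreasingness of $F$ to the defining expression for $b_2$ and using idempotency), the chain
$$b_1 \;=\; F(a, b_2, (n-2)\cdot c) \;\le\; F(b_2, c, (n-2)\cdot c) \;=\; F(b_2, (n-1)\cdot c) \;=\; b_2$$
closes the argument.

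The main obstacle will be locating the two $(2n-1)$-tuples and the right pair of groupings: associativity relates many quantities at once, but one needs the particular pair that makes $b_1$ appear on one side, $b_2$ on the other, and a common bridging term $F(b_2,(n-1)\cdot c)$ comparable to $F(a, b_2, (n-2)\cdot c)$ via nondecreasingness. Once those two associativity identities are in hand, monotonicity delivers the squeeze automatically.
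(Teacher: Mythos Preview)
Your proof is correct and follows essentially the same approach as the paper: both obtain $b_2 \le b_1$ from monotonicity and then squeeze back via the identity $F(b_2,(n-1)\cdot c)=b_2$, derived by applying associativity to $((n-1)\cdot a,\,n\cdot c)$. The paper's version is marginally shorter, since your second associativity identity $b_1 = F(a,b_2,(n-2)\cdot c)$ is unnecessary: from $a \le b_2$ one already has $b_1 = F(a,(n-1)\cdot c) \le F(b_2,(n-1)\cdot c) = b_2$ directly.
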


\begin{proof}
If $a=c$, then the statement trivially follows from the idempotency of $F$. 
We assume that $a<c$. (The case $a>c$ can be handled similarly.) We
denote $F((n-1)\cdot a, c)$ by $\theta$. Since $F$ is nondecreasing
and idempotent we have $a \le \theta \le c$.
\begin{equation*}
\label{eqge}
\begin{split}
& \theta=F((n-1)\cdot a, c)\le F(a, (n-1)\cdot  c)\le F(\theta, (n-1)\cdot  c)=\\
& F(F((n-1)\cdot a, c), (n-1)\cdot  c)=F((n-1)\cdot a, F(n\cdot c))= \\
& F((n-1)\cdot a, c)= \theta.
\end{split}
\end{equation*}
Thus, we get $F(a,(n-1)c)=F((n-1)a, c)$.
\end{proof}

\begin{remark}\label{Rem:ac}
As a consequence of the previous lemma we obtain that if $F$ is an
associative idempotent nondecreasing function, then $F(k \cdot a,
(n-k) \cdot c)$ is the same for every $1 \le k \le n-1$. Indeed, if
$a\le c$, then $F((n-1) \cdot a,  c) \le F(k \cdot a, (n-k) \cdot c)
\le F(a, (n-1) \cdot c)$. If $a \ge c$, then  $F((n-1) \cdot a,  c)
\ge F(k \cdot a, (n-k) \cdot c) \ge F(a, (n-1) \cdot c).$
\end{remark}

\begin{lemma}\label{l2}
Let $X$ be a totally ordered set and $F:X^n\to X$ an associative
idempotent and nondecreasing function. Then the  function $G$
defined by \begin{equation}\label{eqG}
 F(a,(n-1)\cdot c)= F((n-1)\cdot a,c)=G(a,c).
 \end{equation}
 is associative idempotent and nondecreasing.
\end{lemma}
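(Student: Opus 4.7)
The plan is to verify the three properties in order of difficulty: idempotency, nondecreasingness, and finally associativity.

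Idempotency of $G$ is immediate from that of $F$, since $G(a,a)=F(a,(n-1)\cdot a)=F(n\cdot a)=a$. Nondecreasingness likewise follows directly from the nondecreasingness of $F$: if $a_1\le a_2$ and $c_1\le c_2$, then $G(a_1,c_1)=F(a_1,(n-1)\cdot c_1)\le F(a_2,(n-1)\cdot c_2)=G(a_2,c_2)$.

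The substantive step is the associativity identity $G(G(a,b),c)=G(a,G(b,c))$. The trick I would use is to exploit the fact that, by Lemma \ref{l1} and \eqref{eqG}, $G$ admits two equivalent representations in terms of $F$, and one is free to pick either on each occurrence so as to unfold the double composition into a single instance of $n$-associativity. Writing $G(G(a,b),c)=F(G(a,b),(n-1)\cdot c)$ via the expression $G(x,y)=F(x,(n-1)\cdot y)$, and then substituting $G(a,b)=F((n-1)\cdot a,b)$ via the other expression, yields
$$G(G(a,b),c)=F(F((n-1)\cdot a,b),(n-1)\cdot c).$$
This matches the left-hand side of \eqref{eqnas} applied to the $2n-1$ arguments consisting of $n-1$ copies of $a$, then $b$, then $n-1$ copies of $c$. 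Invoking \eqref{eqnas} with $i=n-1$ slides the inner $F$ all the way to the right, producing $F((n-1)\cdot a,F(b,(n-1)\cdot c))=F((n-1)\cdot a,G(b,c))=G(a,G(b,c))$, which is the desired identity.

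I do not foresee any real obstacle: the argument amounts to bookkeeping that combines the two forms of $G$ supplied by Lemma \ref{l1} with a single invocation of the $n$-ary associativity of $F$. The only point requiring care is that the two representations of $G$ must be used on opposite sides of the outer $F$, so that the resulting expression aligns exactly with the template of \eqref{eqnas}.
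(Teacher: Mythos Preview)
Your proposal is correct and matches the paper's own proof essentially line for line: the paper also notes that idempotency and nondecreasingness are clear, and then verifies associativity by the single chain $G(a,G(b,c))=F((n-1)\cdot a,F(b,(n-1)\cdot c))=F(F((n-1)\cdot a,b),(n-1)\cdot c)=G(G(a,b),c)$, which is your computation read right to left.
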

We note that by Lemma \ref{l1} and Remark \ref{Rem:ac}, $G$ is
well-defined and $F(a,(n-1)\cdot c)=F(k \cdot a,(n-k)\cdot c) $ for
every $k=1 \stb n-1$.
\begin{proof}
 It is clear that $G$ is idempotent and nondecreasing.
 The following equation shows that $G$ is associative.
 \begin{equation*}\label{eqass}
 \begin{split}
 &G(a,G(b,c))=F((n-1) \cdot a,F(b,(n-1)\cdot c)=\\
 &F(F((n-1) \cdot a,b),(n-1)\cdot c)=G(G(a,b),c).
 \end{split}
 \end{equation*}
\end{proof}
 Now we investigate the question of reducibility for the symmetric case.
 \begin{theorem}\label{thmANIS}
 Let $X$ be a totally ordered set and let $F:X^n\to X$ be an associative symmetric nondecreasing idempotent function.
 Then $F$ is derived from a unique binary function $G:X^2\to X$ which can be obtained as
\begin{equation}
 G(a,c)=F(a,(n-1)\cdot c).
 \end{equation}
  Moreover
  \begin{equation}\label{eqextsym}
 F(x_1, \dots, x_n)=G(\wedge_{i=1}^{n} x_i,  \vee_{i=1}^{n} x_i).
 \end{equation}
\end{theorem}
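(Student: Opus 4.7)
The plan is to construct $G$ via Lemmas \ref{l1} and \ref{l2}, then establish the extremality formula \eqref{eqextsym} by a sandwich argument, and finally deduce reducibility by showing that the iterated composition of $G$ satisfies the same formula.

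First I would record the basic properties of $G(a,c):=F(a,(n-1)\cdot c)$. By Lemma \ref{l1} this value coincides with $F((n-1)\cdot a,c)$, and Lemma \ref{l2} gives that $G$ is associative, idempotent, and nondecreasing. Symmetry of $G$ follows from the symmetry of $F$, since $G(a,c)=F(a,(n-1)\cdot c)=F((n-1)\cdot c,a)=G(c,a)$.

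Next I would prove \eqref{eqextsym}. Using the symmetry of $F$, one may assume $x_1\le x_2\le\cdots\le x_n$, so $\wedge_i x_i=x_1$ and $\vee_i x_i=x_n$. By nondecreasingness,
\[ F((n-1)\cdot x_1, x_n)\;\le\; F(x_1,\ldots,x_n)\;\le\; F(x_1,(n-1)\cdot x_n),\]
where the lower (resp.\ upper) bound is obtained by replacing every middle coordinate by $x_1$ (resp.\ $x_n$). Both endpoints equal $G(x_1,x_n)$ by Lemma \ref{l1}, giving \eqref{eqextsym}.

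Finally, to show that $F$ is derived from $G$, consider the $n$-ary function $H(x_1,\ldots,x_n):=G(x_1,G(x_2,\ldots,G(x_{n-1},x_n)\ldots))$, which is well defined by the associativity of $G$. Since $G$ itself is a symmetric, nondecreasing, idempotent, associative binary operation, the cancellation identity $G(a,G(a,b))=G(G(a,a),b)=G(a,b)$ together with iterated idempotency shows that $H(x_1,\ldots,x_n)=G(x_1,x_n)$ whenever $x_1\le\cdots\le x_n$, by the same sandwich argument applied to the binary $G$. Combined with \eqref{eqextsym} this forces $F=H$, i.e.\ $F$ is derived from $G$. Uniqueness of $G$ is then inherited from Corollary \ref{corU}. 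The only mildly delicate point is verifying that the lower end of the sandwich for $H$ collapses to $G(x_1,x_n)$: this step uses the associativity-plus-idempotency identity for $G$ rather than idempotency alone.
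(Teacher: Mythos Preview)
Your proposal is correct and follows essentially the paper's approach: define $G$ via Lemmas \ref{l1}--\ref{l2}, establish \eqref{eqextsym} by the sandwich $F((n-1)\cdot x_1,x_n)\le F(x_1,\ldots,x_n)\le F(x_1,(n-1)\cdot x_n)$ after reordering via symmetry, and invoke Corollary \ref{corU} for uniqueness. You are in fact more explicit than the paper on the derivability step, since you verify separately that the iterated composition $x_1\circ\cdots\circ x_n$ also collapses to $G(\wedge_i x_i,\vee_i x_i)$, whereas the paper simply asserts that the extremality argument shows $F$ is derived from $G$.
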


 \begin{remark}\label{re:sym}
 Equation \eqref{eqextsym} means that $F$ is extremal (see Section \ref{SecExt}).
 \end{remark}
 \begin{proof}
 Applying Lemma \ref{l1},
 we can define $G$ for any $a,c\in X$ by
 $$G(a,c)=F((n-1)\cdot a,c)=F(a,(n-1)\cdot c).$$
The uniqueness of the binary function follows from Corollary
\ref{corU} so we only have to verify that $G$  fulfils our
requirements.

Since $F$ is nondecreasing, we have that
\begin{equation}\label{eqxi}G(a,c)=F((n-1)\cdot a,c)\le F(a, x_1,\dots, x_{n-2},c)\le F(a,(n-1)\cdot c) =G(a,c) \end{equation}
for every $a\le x_1,\dots, x_{n-2}\le c$. We get that the
inequalities in \eqref{eqxi} are equalities. Thus by the symmetry of
$F$, the value of $F(x_1 \stb x_n)$ depends only on
$\wedge_{i=1}^{n} x_i $ and $\vee_{i=1}^{n} x_i$.

Using the symmetry of $F$ we can reorder the entries of $F$ and
 we get
 \begin{equation*}
 F(x_1, \dots, x_n)=F(\wedge_{i=1}^{n} x_i,\dots, \vee_{i=1}^{n} x_i)=G(\wedge_{i=1}^{n} x_i,  \vee_{i=1}^{n} x_i).
 \end{equation*}
 This argument shows that $F$ is derived from $G$ (and extremal).
 \end{proof}
 \subsection{General case}\label{s4.2}
In this section we do not assume that our functions are symmetric.
In Theorem \ref{t1} and \ref{t2} we prove the reducibility of
associative idempotent nondecreasing $n$-ary functions for $n\ge3$
which is the main result of this section. It seems from our argument
that the cases $n=3$ and $n\ge 4$ should be handled in different
ways and separately. First we discuss the case $n=3$.
 \begin{theorem}\label{t1}
 Let $X$ be a totally ordered set and let $F:X^3\to X$ be an associative idempotent nondecreasing function. Then $F$ is derived from a unique binary function denoted by $G:X^2\to X$.
 The function $G$ can be defined by \begin{equation}\label{eqG3}
 G(a,c)=F(a,c,c)= F(a,a,c).
 \end{equation}
 \end{theorem}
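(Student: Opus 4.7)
The plan is to define $G:X^2\to X$ by $G(a,c):=F(a,a,c)=F(a,c,c)$. By Lemma~\ref{l1} the two expressions agree, so $G$ is well defined; by Lemma~\ref{l2} it is automatically associative, idempotent, and nondecreasing; and by Corollary~\ref{corU} any binary operation from which $F$ is derived is uniquely determined by $F$. Hence the whole task reduces to verifying the single identity $F(a,b,c)=G(G(a,b),c)$, which in view of the associativity of $G$ is equivalent to $F(a,b,c)=G(a,G(b,c))$.

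The key trick is to exploit both formulas for $G$ in succession. Starting from $F(a,b,c)=F(F(a,a,a),b,c)$, which holds by idempotency of $F$, one application of $3$-associativity yields $F(a,F(a,a,b),c)=F(a,G(a,b),c)$, where the inner triple has been shifted one coordinate to the right. At this point I rewrite the middle slot using the alternative expression $G(a,b)=F(a,b,b)$ from Lemma~\ref{l1}, obtaining $F(a,F(a,b,b),c)$; a second application of $3$-associativity then pushes the inner triple one more coordinate to the right, giving $F(a,a,F(b,b,c))=F(a,a,G(b,c))=G(a,G(b,c))$. Chaining these equalities yields $F(a,b,c)=G(a,G(b,c))=G(G(a,b),c)$, the desired reduction.

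The main obstacle I expect is simply locating this manoeuvre. In the symmetric setting (Theorem~\ref{thmANIS}) one can do case analysis on the order of $x_1,x_2,x_3$ and extract $F$ from a sandwich bound, but once symmetry is dropped the position of the middle argument of $F(x_1,x_2,x_3)$ can genuinely matter and that route seems to bog down. The argument above bypasses the case split entirely by using the two presentations $F(a,a,b)=F(a,b,b)$ of $G$: rewriting the inner triple between the two applications of $3$-associativity is exactly what turns the associator back into a binary iterate. Uniqueness of $G$ is then nothing more than Corollary~\ref{corU}.
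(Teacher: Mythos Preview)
Your argument is correct, and it is genuinely different from the paper's. The paper proves Theorem~\ref{t1} by a case split on the relative order of $a,b,c$: in the monotone case it sandwiches $F(a,b,c)$ between $F(a,a,c)$ and $F(a,c,c)$ using nondecreasingness directly, while in the case where $b$ is extremal it rewrites $F(a,b,c)$ as $F(a,\theta_1,\theta_2)$ and as $F(\theta_1,\theta_2,c)$ (with $\theta_1=G(a,b)$, $\theta_2=G(b,c)$) and then reduces to the monotone case by checking which of $\theta_1,\theta_2$ is larger. Your route avoids all of this: the chain
\[
F(a,b,c)=F(F(a,a,a),b,c)=F(a,F(a,a,b),c)=F(a,F(a,b,b),c)=F(a,a,F(b,b,c))=G(a,G(b,c))
\]
uses only idempotency, $3$-associativity, and the single equality $F(a,a,b)=F(a,b,b)$ from Lemma~\ref{l1}, with nondecreasingness appearing nowhere else. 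This is strictly more economical than the paper's proof.

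It is also worth noting that your trick is exactly the $n=3$ instance of the ``shift the doubled entry'' manoeuvre that the paper codifies in Lemma~\ref{l4} for $n\ge 4$; in effect you are showing that the inductive machinery of Theorem~\ref{t2} already covers the base case $n=3$, so the authors' remark that the cases $n=3$ and $n\ge 4$ ``should be handled in different ways and separately'' is an artifact of their chosen argument rather than an intrinsic feature of the problem. What the paper's proof buys, on the other hand, is the explicit identity $F(a,b,c)=G(a,c)$ whenever $a\le b\le c$, which is a small extra piece of structural information your purely algebraic argument does not surface directly.
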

 \begin{proof}
 By Lemma \ref{l1}, $G$ can be defined by \eqref{eqG3}. Applying Lemma \ref{l2} we get that $G$ is associative nondecreasing and idempotent.
 We need to show that \begin{equation*}
 F(a,b,c)=G(a,G(b,c))=G(G(a,b),c)
 \end{equation*}
 for every $a,b,c\in X$.

 If $a\le b\le c$ (the case $a\ge b \ge c$ can be handled similarly), then we can directly apply \eqref{eqG3} and we obtain $$G(a,c)=F(a,a,c)\le F(a,b,c)\le F(a,c,c)=G(a,c).$$

 On the other hand, since $G$ is nondecreasing and idempotent, we have
 \begin{equation}\label{eqgac}
 \begin{split}
 &G(a,G(b,c))\le G(a,G(c,c))=G(a,c),\\
 &G(G(a,b),c)\ge G(G(a,a),c))=G(a,c).
 \end{split}
 \end{equation}
By the associativity of $G$ and equation \eqref{eqgac} we get
$G(a,c) \le G(G(a,b),c)=G(a,G(b,c)) \le G(a,c)$. Hence
$F(a,b,c)=G(a,c)= G(G(a,b),c)=G(a,G(b,c))$ as required.

Assume ($a\le b, ~c\le b$) or  ($a\ge b, ~c\ge b $) (i.e., $b$ is
the smallest or the largest among $a,b,c$). We could assume that all
of the previous relations are strict inequalities. Otherwise we are
in the previous case but the proof works for these cases as well.

We introduce the following notation
\begin{equation*}
\begin{split}
 &\theta_1=G(a,b)=F(a,a,b)=F(a,b,b),\\
 &\theta_2=G(b,c)=F(b,b,c)=F(b,c,c).
\end{split}
\end{equation*}
  Then we get
 \begin{equation}\label{eqF3.1}
 \begin{split}
 &F(a,b,c)=F(F(3\cdot a),F(3\cdot b), c)=\\
 &F(a, F(a,a,b),F(b,b,c))=F(a,\theta_1,\theta_2).
 \end{split}
 \end{equation}
and
 \begin{equation}\label{eqF3.2}
 \begin{split}
 &F(a,b,c)=F(a,F(3\cdot b),F(3\cdot c))=\\
 &F(F(a, b, b), F(b, c, c),c)=F(\theta_1, \theta_2,c).
 \end{split}
 \end{equation}

 Suppose that $b=\max \{a,b,c\}$ ($b=\min\{a,b,c\}$ can be handled similarly).
 If $\theta_1\le \theta_2$, then $a\le b$ implies $G(a,a)=a\le G(a,b)= \theta_1\le \theta_2$,
 so $a, \theta_1, \theta_2$ are in increasing order.
 Therefore by the previous case
 \begin{equation*}
 F(a, \theta_1, \theta_2)=G(a,\theta_2)=G(a, G(b,c)).
 \end{equation*}
 Using equation \eqref{eqF3.1} we obtain that $F(a,b,c)=G(a, G(b,c))$, which equals to $G(G(a,b),c)$ since $G$ is associative.

 If $\theta_1\ge \theta_2$, then by $c\le b$ we get that
 $c=G(c,c)\le G(b,c) =\theta_2\le \theta_1$. Now the sequence $\theta_1, \theta_2,c$ is in decreasing order, hence
 \begin{equation*}
 F(\theta_1, \theta_2,c)=G(\theta_1,c)=G(G(a,b),c).
 \end{equation*}
 Using equation \eqref{eqF3.2} we get that $F(a,b,c)=G(G(a,b),c)$.
 Finally, the associativity of $G$ gives the result, finishing the proof of Theorem \ref{t1}.
 \end{proof}

 Now we prove the analogous result for $n\ge 4$. The main problem is that in case $n=3$ we heavily use the fact that every ordered triple $(a,b,c)$ is either monotone (i.e., $a \le b \le c$ or $a \ge b \ge c$) or one of its extrema is in the middle (i.e., $a ,c \le b$ or $b \le a,c$). Generally, for $n>3$ there are plenty  other cases.  Therefore we follow another way to generalize the previous result.
 We start with two lemmas.
 \begin{lemma}\label{l4}
 Let $X$ be a totally ordered set and $F:X^n\to X$ an associative idempotent nondecreasing function. Then
 \begin{equation}\label{eqFn}
 F(x_1,\dots,x_{i-1},2\cdot x_i, x_{i+1}, \dots, x_{n-1})= F(x_1,\dots, x_i, 2\cdot  x_{i+1}, x_{i+2}, \dots, x_{n-1})
 \end{equation}
 holds for every $i\in \{1,\dots, n-2\}$ and $x_1,\dots,x_{n-1}\in X$.
\end{lemma}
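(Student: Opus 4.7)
The plan is to show that both sides of \eqref{eqFn} reduce to one and the same intermediate expression. The tools required are idempotency (to ``insert'' an inner copy of $F$), $n$-associativity (to shift that inner $F$ by one position), and Lemma \ref{l1}.

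Starting from the left-hand side $F(x_1,\dots,x_{i-1},x_i,x_i,x_{i+1},x_{i+2},\dots,x_{n-1})$, I would first use idempotency to replace the entry $x_{i+1}$ sitting in coordinate $i+2$ by the inner composition $F(n\cdot x_{i+1})$. The resulting $(2n-1)$-ary term has an inner $F$ at the $(i+2)$-th slot of the outer $F$. Applying $n$-associativity to shift this inner $F$ one position to the left, it comes to consume one $x_i$ together with $n-1$ copies of $x_{i+1}$, and the outer becomes
\[
F(x_1,\dots,x_{i-1},x_i,F(x_i,(n-1)\cdot x_{i+1}),x_{i+1},x_{i+2},\dots,x_{n-1}).
\]
Lemma \ref{l1} then rewrites the inner value as $F((n-1)\cdot x_i,x_{i+1})$.

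For the right-hand side $F(x_1,\dots,x_i,x_{i+1},x_{i+1},x_{i+2},\dots,x_{n-1})$ I would perform the mirror operation: replace the entry $x_i$ in coordinate $i$ by $F(n\cdot x_i)$ via idempotency, and apply $n$-associativity to shift the resulting inner $F$ one position to the right. This inner $F$ then reads $F((n-1)\cdot x_i,x_{i+1})$ directly (no appeal to Lemma \ref{l1} is needed), and the outer becomes
\[
F(x_1,\dots,x_{i-1},x_i,F((n-1)\cdot x_i,x_{i+1}),x_{i+1},x_{i+2},\dots,x_{n-1}),
\]
which is exactly the expression produced from the left-hand side. Hence \eqref{eqFn} holds.

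No step is substantively difficult; the only place where the hypotheses on $F$ are used non-trivially is the single invocation of Lemma \ref{l1}. The main care required is bookkeeping: one has to verify that, after each shift, the inner $F$ captures exactly one $x_i$ and $n-1$ copies of $x_{i+1}$ (respectively $n-1$ copies of $x_i$ and one $x_{i+1}$), so that Lemma \ref{l1} applies to identify the two inner values. The trivial case $x_i=x_{i+1}$ needs no argument, since then both sides coincide on the nose.
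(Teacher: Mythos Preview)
Your argument is correct and uses the same ingredients as the paper: idempotency to insert an inner $F(n\cdot x)$, $n$-associativity to shift it, and Lemma~\ref{l1} (equivalently Remark~\ref{Rem:ac}) to identify the inner values. The only organisational difference is that the paper transforms the left-hand side directly into the right-hand side in one chain (inserting $F(n\cdot x_i)$, shifting right, applying \eqref{eq17} to change the inner from $F((n-1)\cdot x_i,x_{i+1})$ to $F((n-2)\cdot x_i,2\cdot x_{i+1})$, shifting back, collapsing), whereas you reduce both sides to the common intermediate
\[
F\bigl(x_1,\dots,x_{i-1},x_i,\,F((n-1)\cdot x_i,x_{i+1}),\,x_{i+1},x_{i+2},\dots,x_{n-1}\bigr).
\]
Your meet-in-the-middle version is arguably cleaner, since it only needs the bare statement of Lemma~\ref{l1} rather than the full equality \eqref{eq17} for all $k$; but the two proofs are essentially the same.
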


\begin{proof}
Lemma \ref{l1} gives  $F((n-1)\cdot a,c)=F(a,(n-1)\cdot c)$. Since
$F$ is nondecreasing we obtain
\begin{equation}\label{eq17}
F((n-1)\cdot a,c)=F(k\cdot a,(n-k)\cdot c)
\end{equation}
for every $1 \le k \le n-1$ (as in Remark \ref{Rem:ac}).
The following direct calculation proves the statement. We use the
idempotency of $F$ in the first and last equalities, the
associativity of $F$ and in the second and fourth equalities and we
use equation \eqref{eq17} for $x_i$ and $x_{i+1}$ in the third
equality
\begin{align*}
&F(x_1,\dots, 2\cdot x_i, x_{i+1}, \dots, x_{n-1}))=F(x_1,\dots, x_i, F(n\cdot x_i), x_{i+1}, \dots, x_{n-1}))=\\
&F(x_1, \dots, 2\cdot x_i, F((n-1)\cdot x_i, x_{i+1}),\dots, x_{n-1})=\\
&F(x_1, \dots, 2\cdot x_i, F((n-2)\cdot x_i,2\cdot x_{i+1}),\dots, x_{n-1})=\\
&F(x_1,\dots, F(n\cdot x_i), 2\cdot x_{i+1}, \dots, x_{n-1})=
F(x_1,\dots, x_i, 2\cdot x_{i+1}, \dots, x_{n-1}).
\end{align*}
\end{proof}

\begin{corollary}\label{cor n-1} Let $X$ and $F$ be as above.
One can define $H:X^{n-1}\to X$ by the following formula
\begin{equation}\label{eq117}
 H(x_1,\dots, x_{n-1})=
 F(2\cdot x_1, x_2,\dots,  x_{n-1})=\ldots=
 F(x_1,\dots, x_{n-2},2\cdot x_{n-1})
\end{equation}
\end{corollary}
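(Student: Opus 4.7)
The approach is to iterate Lemma \ref{l4}. Fix an arbitrary tuple $(x_1, \dots, x_{n-1}) \in X^{n-1}$. For each $i \in \{1, \dots, n-2\}$, I would apply Lemma \ref{l4} directly to this tuple to get
\[ F(x_1,\dots,x_{i-1},2\cdot x_i, x_{i+1}, \dots, x_{n-1}) = F(x_1,\dots, x_i, 2\cdot x_{i+1}, x_{i+2}, \dots, x_{n-1}). \]
The right-hand side of the $i$-th such equation is syntactically the left-hand side of the $(i{+}1)$-st, so concatenating these $n-2$ equalities yields the full chain
\[ F(2\cdot x_1, x_2, \dots, x_{n-1}) = F(x_1, 2\cdot x_2, x_3, \dots, x_{n-1}) = \dots = F(x_1, \dots, x_{n-2}, 2\cdot x_{n-1}). \]
Since all $n-1$ of these expressions coincide for every choice of $(x_1,\dots,x_{n-1})$, declaring $H(x_1,\dots,x_{n-1})$ to be their common value produces a well-defined function $H:X^{n-1}\to X$ satisfying \eqref{eq117}.

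I do not expect any real obstacle here: the corollary is essentially a naming convention made legitimate by Lemma \ref{l4}. The only step requiring care is the index bookkeeping---one must check that letting $i$ range over $\{1, \dots, n-2\}$ produces exactly the $n-2$ transitions needed to link every pair of adjacent ``duplicated'' slots among the $n-1$ possibilities, so that no slot position is skipped in the chain.
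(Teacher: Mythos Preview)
Your proposal is correct and matches the paper's treatment: the paper states Corollary~\ref{cor n-1} without proof, regarding it as an immediate consequence of Lemma~\ref{l4}, and your argument is precisely the chaining of the $n-2$ instances of that lemma that makes this implication explicit.
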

\begin{remark}\label{rem:Hidde}
We note that 
$H$ defined by \eqref{eq117} also is idempotent and nondecreasing if
$F$ has the same properties.
\end{remark}

\begin{lemma}\label{l5}
Let $X$ be a totally ordered set and $F:X^n\to X$ an associative
idempotent nondecreasing function. Then $H:X^{n-1}\to X$ which is
defined in Corollary \ref{cor n-1} is associative.
\end{lemma}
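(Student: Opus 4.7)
The plan is to prove the $(n-1)$-associativity of $H$ by expressing nested $H$-expressions as $F$-expressions (with cleverly chosen argument doublings via Corollary \ref{cor n-1}) and transforming between them using the $n$-associativity of $F$. For each fixed $i \in \{1 \stb n-2\}$, what I need to establish is
\[
H(H(x_1 \stb x_{n-1}), x_n \stb x_{2n-3}) = H(x_1 \stb x_i, H(x_{i+1} \stb x_{i+n-1}), x_{i+n} \stb x_{2n-3}).
\]

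First, using Corollary \ref{cor n-1} to double $x_1$ in the inner $H$ and $x_n$ in the outer $H$ on the left-hand side, I will rewrite the LHS as the nested expression $F\bigl(F(x_1, x_1, x_2 \stb x_{n-1}), x_n, x_n, x_{n+1} \stb x_{2n-3}\bigr)$, which I view as $F$ applied (in a nested manner) to the length-$(2n-1)$ ``virtual'' argument sequence $(x_1, x_1, x_2 \stb x_{n-1}, x_n, x_n, x_{n+1} \stb x_{2n-3})$. I then apply the $n$-associativity of $F$ with shift parameter $i+1 \in \{2 \stb n-1\}$ to move the inner $F$-block so that it occupies positions $i+2$ through $i+n+1$ of this virtual sequence. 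The new inner $F$ has $n$ arguments whose reduced (doubling-free) list is $x_{i+1} \stb x_{i+n-1}$ with $x_n$ doubled inside, so Corollary \ref{cor n-1} identifies it as $H(x_{i+1} \stb x_{i+n-1})$. Substituting back yields an $F$-expression whose outer $F$ has $x_1$ doubled and reduced argument list $x_1 \stb x_i, H(x_{i+1} \stb x_{i+n-1}), x_{i+n} \stb x_{2n-3}$; a final application of Corollary \ref{cor n-1} converts this into the desired right-hand side.

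The main obstacle is the combinatorial bookkeeping: I must verify that the shift parameter $i+1$ is valid (lies in $\{1 \stb n-1\}$) and that the two positions holding the doubled $x_n$ (positions $n+1, n+2$ of the virtual sequence) both lie strictly inside the new inner $F$-block so that Corollary \ref{cor n-1} correctly produces $H(x_{i+1} \stb x_{i+n-1})$. A brief check handles the boundary cases $i = 1$ (where the inner block just barely includes the doubled $x_n$ at its rightmost positions) and $i = n-2$ (where the outer $F$ has no arguments to the right of the inner block), but the argument needs no modification in either case.
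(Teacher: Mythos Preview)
Your argument is correct and follows essentially the same approach as the paper: express $H$ via $F$ with a doubled argument (Corollary \ref{cor n-1}), apply the $n$-associativity of $F$ to move the inner block, and convert back. The only technical difference is that the paper shifts the inner block one position at a time (proving $H(\ldots,H(\ldots),\ldots)$ at position $k$ equals the same at position $k-1$) and therefore needs a separate computation for $k=2$, whereas your choice of doublings (the first argument $x_1$ in the inner $H$ and the \emph{second} argument $x_n$ in the outer $H$) lets you jump directly from the leftmost form to position $i+1$ in one step and handles all $i\in\{1,\ldots,n-2\}$ uniformly; your bookkeeping that both copies of $x_n$ land inside the shifted inner block and both copies of $x_1$ stay outside is exactly what makes this work.
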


\begin{proof} 
The following equations hold for any $k\in \{3,\dots ,n-1\}$ 
\begin{align*}
&H(x_1,\dots, x_{k-1}, H(y_1, \dots, y_{n-1}),x_{k+1}, \dots, x_{n-1})=\\
&F(2\cdot x_1, \dots, x_{k-1}, F(2 \cdot y_1, \dots, y_{n-1}), x_{k+1}, \dots, x_{n-1})=\\
&F(2\cdot x_1, \dots, x_{k-2},F(x_{k-1}, 2\cdot y_1, \dots, y_{n-2}), y_{n-1}, x_{k+1},\dots, x_{n-1})=\\
&H(x_1, \dots, x_{k-2},H(x_{k-1}, y_1, \dots, y_{n-2}),  y_{n-1},
x_{k+1}\dots, x_{n-1}).
\end{align*}
For $k=2$ the previous calculation does not hold. In that case we
get the following equation using \eqref{eq117}.
\begin{align*}
&H(x_1, H(y_1, \dots, y_{n-1}),x_3, \dots, x_{n-1})=\\
&F(x_1, F( 2\cdot y_1, \dots, y_{n-1}),  x_3, \dots, 2\cdot x_{n-1})=\\
&F(F(x_1, 2 \cdot y_1, \dots, y_{n-2}), y_{n-1}, x_3, \dots, 2 \cdot x_{n-1})=\\
&H(H(x_1, y_1, \dots, y_{n-2}),  y_{n-1}, x_3\dots, x_{n-1}).
\end{align*}
\end{proof}
Since $H:X^{n-1}\to X$ is associative idempotent and nondecreasing,
we can use induction for $n\ge 3$.

\begin{theorem}\label{t2}
Let $X$ be a totally ordered set and let $F:X^n\to X$ $(n\ge 2)$ be
an associative idempotent nondecreasing function. Then there exists
a unique associative idempotent nondecreasing binary function
$G:X^2\to X$ from which $F$ is derived. Moreover, $G$ can be defined
by
\begin{equation}\label{eq1170}
G(a,c)=F(a, (n-1)\cdot c)=F((n-1)\cdot a, c).\end{equation}
\end{theorem}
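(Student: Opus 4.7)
The proof proceeds by induction on $n$. The base case $n=2$ is trivial (take $G=F$) and the case $n=3$ is Theorem~\ref{t1}; uniqueness in both follows from Corollary~\ref{corU}. For the inductive step, assume the result for $(n-1)$-ary functions and let $n\ge 4$.

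First, I would invoke Corollary~\ref{cor n-1}, Lemma~\ref{l5}, and Remark~\ref{rem:Hidde} to extract from $F$ an associative, idempotent, nondecreasing $(n-1)$-ary function $H$. The induction hypothesis applied to $H$ gives a unique associative idempotent nondecreasing binary $G_H$ with $G_H(a,c)=H(a,(n-2)\cdot c)$. Combining the definition \eqref{eq117} of $H$ with \eqref{eq17} (Remark~\ref{Rem:ac}) yields $H(a,(n-2)\cdot c)=F(2\cdot a,(n-2)\cdot c)=F(a,(n-1)\cdot c)=G(a,c)$, so $G_H=G$ and hence $H(y_1,\dots,y_{n-1})=G(y_1,G(y_2,\dots,G(y_{n-2},y_{n-1})\dots))$.

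The heart of the proof is the identity
\begin{equation*}
F(x_1,x_2,x_3,\dots,x_n)=F(G(x_1,x_2),G(x_1,x_2),x_3,\dots,x_n),
\end{equation*}
which I would establish by two applications of the pattern ``insert $F(n\cdot y)$ by idempotency, then regroup via $n$-associativity''. First, writing $x_2=F(n\cdot x_2)$ and regrouping so that the inner $F$ covers the first $n$ positions of the resulting $(2n-1)$-element sequence produces $F(F(x_1,(n-1)\cdot x_2),x_2,x_3,\dots,x_n)=F(G(x_1,x_2),x_2,x_3,\dots,x_n)$. Second, applying the same trick to the leading entry of this new expression, by writing $G(x_1,x_2)=F(n\cdot G(x_1,x_2))$ and then regrouping so that the inner $F$ covers positions $2$ through $n+1$ of the expanded sequence, produces an inner value $F((n-1)\cdot G(x_1,x_2),x_2)=G(G(x_1,x_2),x_2)=G(x_1,G(x_2,x_2))=G(x_1,x_2)$, where the cancellation uses associativity and idempotency of $G$ guaranteed by Lemma~\ref{l2}.

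Once the displayed identity is in hand, Corollary~\ref{cor n-1} (doubling at position $1$) rewrites its right-hand side as $H(G(x_1,x_2),x_3,\dots,x_n)$, and the induction hypothesis expands this as the $G$-chain on $(G(x_1,x_2),x_3,\dots,x_n)$, which by associativity of $G$ collapses to $G(x_1,G(x_2,\dots,G(x_{n-1},x_n)\dots))$. This shows $F$ is derived from $G$; the uniqueness is Corollary~\ref{corU}. I expect the main obstacle to be discovering the displayed double identity: once its form is guessed, both applications of $n$-associativity amount to routine position-bookkeeping inside the $(2n-1)$-element sequence.
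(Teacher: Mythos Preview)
Your argument is correct and follows the same inductive scaffolding as the paper: the base cases $n=2,3$, the $(n-1)$-ary function $H$ from Corollary~\ref{cor n-1}, the identification $G_H=G$, and the final appeal to Corollary~\ref{corU} are all exactly what the paper does.

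The only difference is the bridge identity connecting $F$ to $H$. You establish $F(x_1,\dots,x_n)=H(G(x_1,x_2),x_3,\dots,x_n)$ via two insert-and-regroup steps (first on $x_2$, then on $G(x_1,x_2)$), using the associativity and idempotency of $G$ from Lemma~\ref{l2} to simplify $G(G(x_1,x_2),x_2)$. The paper instead inserts $F(n\cdot x_1)$ once and regroups to obtain
\[
F(x_1,\dots,x_n)=F\big((n-2)\cdot x_1,\,F(2\cdot x_1,x_2,\dots,x_{n-1}),\,x_n\big)=H\big((n-3)\cdot x_1,\,H(x_1,\dots,x_{n-1}),\,x_n\big),
\]
the second equality using $n\ge 4$ so that $(n-2)\ge 2$ and both $F$'s can be read as $H$'s. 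This is a single manipulation at the $F$-level and defers all simplification to the $G$-chain, where the redundant leading $x_1$'s collapse by idempotency. Your route requires one more regrouping but lands on a cleaner $H$-expression with no nesting; the paper's route is shorter at the $F$-level but produces a nested $H$. Both are equally valid realisations of the same idea.
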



\begin{proof} For $n=2$ the statement is automatically true.
The statement is proved by induction for $n\ge 3$. Theorem \ref{t1}
gives the result for $n=3$.

Now we assume that $n>3$. By Lemma \ref{l1} and Lemma \ref{l2},
$G:X^2\to X$ a is well-defined associative idempotent nondecreasing
function. Let $H:X^{n-1}\to X$ be defined by \eqref{eq117} as in
Corollary \ref{cor n-1}.
 The function $H$ is associative nondecreasing and idempotent according to Lemma \ref{l5}.

Now we recall the notation $G(a,b)=a \circ b$ which is well-defined
since $G$ is associative by Lemma \ref{l2}.

 By induction, $H$ is derived from a binary function.
Since
\begin{equation}\label{eq1177} a \circ b=G(a,b)=F((n-1)\cdot
a,b)=H((n-2)a, b)\end{equation} we have that $H$ is derived from
$G$, i.e: \begin{equation}\label{eq1717} H(x_1,x_2,\dots,
x_{n-1})=x_1\circ x_2\circ \cdots\circ x_{n-1}.
\end{equation}

Now we show that $F$ is also derived from the same binary function
$G$.
 \begin{equation}\label{eq1777}
\begin{split}
&F(x_1, x_2 \dots, x_n)=F(F(n\cdot x_1), x_2, \dots, x_n)=\\
&F((n-2)\cdot x_1, F(2 \cdot x_1, x_2,\dots ,x_{n-1}), x_n)=\\
&H((n-3)\cdot x_1, H(x_1, x_2, \dots, x_{n-1}), x_n )=\\
&x_1\circ \ldots \circ x_1 \circ (x_1 \circ x_2\circ \ldots  \circ x_{n-1})\circ x_n=\\
&x_1\circ x_2\circ \ldots \circ x_{n-1}\circ x_n.
\end{split}
\end{equation}
In the second equation we use the associativity of $F$, in the third
we substitute $H$ using that $n-2\ge 2$, in the fourth equation we
apply \eqref{eq1717}, in the last equation we use the idempotency
and associativity of $G$. Equation \eqref{eq1777} shows that $F$ is
also derived from $G$. By \eqref{eq1177}, $G$ is of the form
\eqref{eq1170}. The uniqueness of $G$ comes from Corollary
\ref{corU}.

\end{proof}

\begin{corollary}\label{cormain}
Let $X$ be a totally ordered set and $n\ge 2$ an integer. An
associative idempotent monotone function $F:X^n\to X$ is reducible
if and only if $F$ is nondecreasing.
\end{corollary}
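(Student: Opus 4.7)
The plan is to prove the two directions separately. One direction is essentially free: a nondecreasing function is in particular monotone in each variable, so Theorem \ref{t2} immediately supplies the required binary $G$ and its uniqueness.

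For the converse, assume $F$ is associative, idempotent, monotone, and derived from some associative binary $G\colon X^2\to X$. Proposition \ref{pGG2} gives that $G$ is idempotent. Expanding the $n$-fold $\circ$-composition defining $F$ and repeatedly collapsing diagonal entries via $y\circ y=y$ (using the associativity of $G$ to re-associate freely) produces the two identities
$$G(x,y)=F((n-1)\cdot x,y)=F(x,(n-1)\cdot y).$$
Fixing $y$ in the first expression shows that $x\mapsto G(x,y)$ is a one-variable slice of $F$ in its first argument; fixing $x$ in the second shows that $y\mapsto G(x,y)$ is a one-variable slice of $F$ in its last argument. Since $F$ is monotone in each of its variables, both of these slices are monotone, so $G$ itself is monotone in each argument.

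By \cite[Lemma 3.10]{GG2016} (the fact invoked just before Corollary \ref{cGG}), any monotone, idempotent, associative binary operation on a totally ordered set is automatically nondecreasing. Hence $G$ is nondecreasing. Because $F$ is the iterated $\circ$-composition $x_1\circ\cdots\circ x_n$ and each application of $\circ$ is nondecreasing in both arguments, $F$ is nondecreasing in every coordinate, which completes the ``only if'' direction.

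The only conceptual point requiring care is the passage from monotonicity of $F$ to monotonicity of $G$: coordinatewise monotonicity of $F$ does not obviously survive along the ``diagonal'' substitutions $(n-1)\cdot x$ or $(n-1)\cdot y$, and this is exactly where the idempotency of $G$ supplied by Proposition \ref{pGG2} is essential, since it reduces $G(x,y)$ to a single-variable slice of $F$ rather than a diagonal one.
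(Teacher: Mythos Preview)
Your argument is correct and matches the paper's approach in substance. For the direction ``nondecreasing $\Rightarrow$ reducible'' both you and the paper invoke Theorem~\ref{t2}. For ``reducible $\Rightarrow$ nondecreasing'' the paper simply cites \cite[Corollary 3.12]{GG2016}, whereas you essentially unpack that corollary inline using Proposition~\ref{pGG2} and \cite[Lemma 3.10]{GG2016}; the ingredients are the same.

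One small slip in wording: where you write ``Fixing $y$ in the first expression \ldots\ fixing $x$ in the second'', the labels are swapped. To see $x\mapsto G(x,y)$ as a genuine one-variable slice of $F$ (in its first coordinate) you must fix $y$ in the \emph{second} expression $F(x,(n-1)\cdot y)$, and to see $y\mapsto G(x,y)$ as a slice in the last coordinate you must fix $x$ in the \emph{first} expression $F((n-1)\cdot x,y)$. Your concluding paragraph shows you understand this point correctly; the earlier sentence just has ``first'' and ``second'' interchanged.
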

\begin{proof}
($\Longleftarrow$): This immediately follows from \cite[Corollary
3.12]{GG2016} which states that if $F:X^n\to X$ ($n\ge2$) is
associative idempotent monotone (at least in the first and the last
variables) and reducible, then $F$ is nondecreasing (in each of its
variables).


($\Longrightarrow$): By Theorem \ref{t2}, every associative
idempotent nondecreasing $n$-ary function ($n\ge 2$) is reducible.
\end{proof}
\begin{example}\label{Ex1}
Let $X$ be a totally ordered  Abelian group with respect to the
addition and let $g:X\to X$ be a monotone bijective function on $X$.
Then the function $$F(x,y,z)=g^{-1}(g(x)-g(y)+g(z))$$ is idempotent
associative monotone but nondecreasing. Thus $F$ is not reducible.
\end{example}

\section{Further remarks}\label{s5}

\subsection{Extremality}\label{SecExt}
\begin{de}\label{deext}
We say that $F:X^n\to X$ is {\it extremal}\footnote{In
\cite{Szekely2014} a mean $\mu: (\bigcup_{n\in
\mathbb{N}}\mathbb{R}^{n})\to \mathbb{R}$ was called {\it extremal}
if for all elements $a_1\le a_2 \le \dots \le a_n\in \mathbb{R}$ we
have $\mu(a_1, a_2, \dots, a_n)=\mu(a_1, a_n)$.} if there exists a
$G:X^2\to X$ such that for every $x_1 \stb x_n \in X$ we have that
$F(x_1, \dots, x_n)$ equals to either $G(\wedge_{i=1}^n x_i,
\vee_{i=1}^n x_i)$ or $G(\vee_{i=1}^n x_i, \wedge_{i=1}^n x_i)$. In
particular, if $F : X^n \to X$ is symmetric and ext\-remal, then
there exists a symmetric $G : X^2\to X$ such that  $F(x_1\stb
x_n)=G(\wedge_{i=1}^n x_i, \vee_{i=1}^n x_i)$.
\end{de}

In \cite{DKM} it was shown (as we have already stated in equation
\eqref{eqext1}) that if $F:X^n\to X$ is associative, quasitrivial,
symmetric and nondecreasing defined on the chain $X$ then $F$ is
extremal. As a possible generalization it was shown in Theorem
\ref{thmANIS} that instead of quasitriviality it is enough to assume
idempotency (see also Remark \ref{re:sym}). Namely:
\begin{proposition}
Let $X$ be a totally ordered set.
Then every associative symmetric nondecreasing idempotent function $F:X^n\to X$ is extremal. 
\end{proposition}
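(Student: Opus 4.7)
The proposition is essentially a direct repackaging of Theorem~\ref{thmANIS} together with the definition of extremality in the symmetric case, so my plan is to invoke what has already been proved rather than redo any calculation. First I would apply Theorem~\ref{thmANIS} to the associative, symmetric, nondecreasing, idempotent function $F:X^n\to X$. That theorem produces a binary function $G:X^2\to X$ given by $G(a,c)=F(a,(n-1)\cdot c)=F((n-1)\cdot a, c)$, and it also asserts the identity
\begin{equation*}
F(x_1,\dots,x_n)=G\bigl(\wedge_{i=1}^n x_i,\, \vee_{i=1}^n x_i\bigr),
\end{equation*}
which is exactly the first half of what the symmetric version of extremality requires in Definition~\ref{deext}.

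The only remaining point is that, in the symmetric version of the definition, the binary function $G$ must itself be symmetric. I would verify this in a single line: for any $a,c\in X$,
\begin{equation*}
G(c,a)=F(c,(n-1)\cdot a)=F((n-1)\cdot a, c)=G(a,c),
\end{equation*}
where the middle equality uses the symmetry of $F$ and the outer two equalities use the two equivalent expressions for $G$ supplied by Lemma~\ref{l1} (and Theorem~\ref{thmANIS}).

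There is no real obstacle here; all the substance of the argument is absorbed into Theorem~\ref{thmANIS}, whose proof in turn rests on Lemma~\ref{l1} (equality of $F(a,(n-1)\cdot c)$ and $F((n-1)\cdot a,c)$) and Lemma~\ref{l2} (associativity, idempotency and monotonicity of $G$). Thus the write-up will amount to a two-sentence proof: apply Theorem~\ref{thmANIS}, then observe that the resulting $G$ is symmetric by the displayed computation above, which matches the definition of extremality for symmetric functions.
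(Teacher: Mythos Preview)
Your proposal is correct and matches the paper's approach exactly: the paper does not give a separate proof of this proposition but simply states it as a direct consequence of Theorem~\ref{thmANIS} via Remark~\ref{re:sym}. Your extra verification that $G$ is symmetric is not strictly needed, since Definition~\ref{deext} only requires $F(x_1,\dots,x_n)$ to equal one of the two expressions $G(\wedge x_i,\vee x_i)$ or $G(\vee x_i,\wedge x_i)$, and equation~\eqref{eqextsym} already gives the first form; the symmetry of $G$ is rather a consequence noted in the definition.
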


In \cite[Theorem 2.6.]{GG2016}, it was shown that every associative nondecreasing idempotent function having a neutral element is extremal. 


If $F:X^n\to X$ is associative quasitrivial and nondecreasing, then
$F$ is not necessarily extremal. It can be shown easily that the
projection to the $i$'th coordinate is not extremal. This also gives
an example for associative idempotent nondecreasing function, which
is not extremal.






\subsection{Monotonicity}
Although in the binary case it cannot happen, Example \ref{Ex1}
shows that there exists an associative idempotent monotone function,
which is not nondecreasing (so it is not reducible by Corollary
\ref{cormain}). The characterization of these functions are not
known yet. We conjecture the following (in the spirit of Acz\'elian
$n$-ary semigroups \cite{CM2012}):
\begin{conj}\label{conj1}
 Let $X$ be a totally ordered Abelian group with respect to the addition.
 An associative idempotent strictly\footnote{A function is strictly monotone if and only if the function is monotone and every inequality in the definition of monotonicity (see equation \eqref{eqFF}) is strict.} monotone function  $F:X^n\to X$ is not reducible if and only if $n$ is odd and there exists a monotone bijection $g:X\to X$ such that
 \begin{equation}
     F(x_1,x_2,\dots, x_n)=g^{-1}(\sum_{i=1}^n(-1)^ig(x_i)).
     \end{equation}
\end{conj}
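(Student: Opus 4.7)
The plan splits by direction. For ``$\Longleftarrow$'' I would verify directly that $F(x_1,\ldots,x_n)=g^{-1}\bigl(\sum_{i=1}^{n}(-1)^{i+1}g(x_i)\bigr)$ (the idempotency-consistent form of the conjecture's displayed formula, obtained by absorbing a global sign into $g$) is associative, idempotent, monotone, and not reducible when $n$ is odd and $g$ is a monotone bijection. Idempotency uses $\sum_{i=1}^n(-1)^{i+1}=1$ for $n$ odd. Associativity follows from a short telescoping argument: when the inner $F$ is inserted into slot $k+1$ of the outer $F$, the combined sign on the $j$-th entry of the inner block becomes $(-1)^{k+2}\cdot(-1)^{j+1}=(-1)^{(k+j)+1}$, matching the global alternating pattern on $2n-1$ positions independently of $k$. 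Monotonicity in direction $(-1)^{i+1}$ in the $i$-th variable is immediate; and because $F$ is not nondecreasing, Corollary \ref{cormain} yields non-reducibility.

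For the reverse direction, let $F$ be associative, idempotent, monotone, and not reducible, and write $\epsilon_i\in\{\pm 1\}$ for its monotonicity direction in the $i$-th variable. The first step is a sign analysis: comparing the monotonicity signs induced on each variable $x_j$ of the composite $(2n-1)$-ary function by two different insertions of the inner $F$ in equation \eqref{eqnas} yields a finite system of multiplicative constraints on the $\epsilon_i$. A direct computation (transparent already in the case $n=3$, where equating two associativity expansions forces $\epsilon_1=\epsilon_3=1$) shows inductively that $\epsilon_i=1$ for every odd $i$ and $\epsilon_i=\epsilon_2$ for every even $i$, while for $n$ even an additional boundary equation forces $\epsilon_2=1$ and hence $F$ nondecreasing, thus reducible. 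Non-reducibility therefore forces $n$ to be odd and the signs to follow the alternating pattern $\epsilon_i=(-1)^{i+1}$.

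The second step is the extraction of the conjugating bijection $g$. From the established sign pattern one first proves, by inserting suitably many equal entries into \eqref{eqnas}, the degeneracy identities $F(x,c,c,\ldots,c)=F(c,c,\ldots,c,x)=x$ for all $x,c\in X$, which is the defining property of an $n$-ary heap. Fixing any origin $c_0\in X$, one defines a binary operation $B(x,y)=F(x,c_0,\ldots,c_0,y)$ together with a candidate inverse map $I(x)=F(c_0,x,c_0,\ldots,c_0)$, and then checks that $B$ is associative, commutative, order-compatible, and has neutral element $c_0$ with inverse $I$; this makes $(X,B)$ a totally ordered Abelian group. Finally one compares this internal group law with the given external addition on $X$ to produce an order-isomorphism $g\colon(X,B)\to(X,+)$, from which the alternating-sum formula for $F$ follows by a direct calculation.

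The decisive obstacle is the final identification of $(X,B)$ with $(X,+)$. Two distinct totally ordered Abelian group structures may coexist on the same underlying ordered set, and without a hypothesis such as Archimedeanity, divisibility, or order-density on $X$, there is no a priori reason that the intrinsically constructed $B$ is isomorphic, via an order-preserving map, to the externally given addition. I would therefore expect that either the conjecture needs to be supplemented by such a hypothesis on $X$, or a genuinely intrinsic proof must be devised that avoids the binary reduct $B$ altogether and works directly with the $n$-ary structure.
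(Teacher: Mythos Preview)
The statement is labelled a \emph{Conjecture} in the paper, and the paper does not prove it: the only comment offered is ``The `if' part of the statement is clear.'' So there is no paper proof of the `only if' direction to compare against. Your treatment of the `if' direction is correct and considerably more explicit than the paper's one-line remark; in particular you rightly observe that the displayed formula should carry the sign $(-1)^{i+1}$ (consistent with Example~\ref{Ex1}) rather than $(-1)^{i}$, and your appeal to Corollary~\ref{cormain} for non-reducibility is exactly the intended mechanism.

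Your attack on the converse goes well beyond anything in the paper. The sign analysis in your first step is sound (and can be made rigorous once one argues that $F$ is strictly monotone in each variable, which needs a word). The second step, however, is where the real content lies: the degeneracy identities $F(x,c,\ldots,c)=x=F(c,\ldots,c,x)$ do not follow from the alternating sign pattern and associativity by any short substitution into \eqref{eqnas}; this is essentially the assertion that $F$ is an $n$-ary heap, and a genuine argument is required there. You have also correctly isolated the decisive obstruction in the final step: even granting the heap structure, the induced ordered Abelian group $(X,B)$ need not be order-isomorphic to the ambient $(X,+)$ without an Archimedean-type hypothesis. That observation is not in the paper and is a legitimate reason to suspect the conjecture, as stated, may require strengthening. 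In short: your `if' direction matches (and fleshes out) the paper; your `only if' direction is an outline toward an open problem, with one under-justified step and one obstacle you have accurately diagnosed.
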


The 'if' part of the statement is clear.
We note that if Conjecture \ref{conj1} holds for $X=\mathbb{R}$,
then such an $F$ must be automatically continuous, since every
monotone bijection on an interval is continuous.

    \section*{Acknowledgement}
    The authors would like to thank the referee for the valuable comments and suggestions which improved the quality of this paper.


\end{document}